\documentclass[11pt]{article}
\usepackage[ansinew]{inputenc}

\usepackage{latexsym,bezier,enumerate,longtable,dcolumn,color,xspace,curves,mathpazo,url, pstricks}
\usepackage{amsmath,amsthm,amsopn,amstext,amscd,amsfonts,amssymb,mathtools,fancybox,pst-node,pst-tree}

\usepackage{epsfig,epic,graphicx,pstricks,epsfig,graphics}
\usepackage{pgf,tikz}

\usepackage{booktabs,natbib,comment}
\usepackage{enumitem}
\newcommand{\bd}[1]{}

\usepackage{multirow}
\usepackage{diagbox}
\usepackage{array}

\usepackage{rotating}
\usepackage{colortbl}

\usepackage{siunitx}
\mathchardef\isinpunto="0010

\newcommand{\G}{{\Gamma}}

\newcommand{\SR}{\mathbb{R}}
\DeclareMathSymbol{\shortminus}{\mathbin}{AMSa}{"39}
\newcommand{\NEG}[1]{\shortminus#1}

\newcommand{\cE}{{\cal E}}
\newcommand{\cG}{{\cal G}}
\newcommand{\cI}{{\cal I}}
\newcommand{\cK}{{\cal K}}
\newcommand{\cM}{{\cal M}}
\newcommand{\cN}{{\cal N}}

\newcommand{\cU}{{\cal U}}
\newcommand{\cC}{{\cal C}}

\theoremstyle{plain}
\newtheorem{proposition}{\bf Proposition}
\newtheorem{theorem}{\bf Theorem}

\theoremstyle{definition}
\newtheorem{definition}{\bf Definition}
\newtheorem{example}{\bf Example}

\title{\baselineskip .2in
\bf Myerson Interaction Index\footnote{This research has been
supported by I+D+i research projects  PID2020-116884GB-I00 from the Government of Spain, and PR27/25-32473 from Universidad Complutense de Madrid. }}
\date{\today}
\author{}

\begin{document}
\setlength{\abovedisplayskip}{5mm} 
\setlength{\abovedisplayshortskip}{3mm} 
\setlength{\belowdisplayskip}{6mm} 
\setlength{\belowdisplayshortskip}{5mm} 

\maketitle \vspace*{-1cm} {\baselineskip .2in}
\begin{center}
{\small
\begin{tabular}{l} \bf Jorge Gonz\'alez-Ortega$^1$
\\ e-mail: \url{jgortega@ucm.es}
\\
\bf Elisenda Molina$^1$
\\ e-mail: \url{elisenda.molina@ucm.es}
\\
\bf Juan Tejada$^1$
\\  e-mail: \url{jtejada@mat.ucm.es}
\\[2mm] $ ^1$ Instituto de Matem\'atica Interdisciplinar (IMI), Dpt. Estad\'{\i}stica e Investigaci\'on Operativa,
\\ Universidad Complutense de Madrid, Spain
\end{tabular}}
\end{center}
%


\begin{abstract}
This paper introduces the Myerson interaction index (MII), an extension of the Shapley interaction index to cooperative games with communication structures restricted by graphs. We establish a formal framework for interaction indices on graphs and provide an axiomatic characterization of the MII based on component efficiency, fairness, and veto partnership consistency. Furthermore, we analyse network-induced interaction to distinguish between effects arising from the graph topology and those stemming from the game's intrinsic synergies. This framework offers a robust tool for analysing coalitional behaviour in structured environments, with potential applications in social network analysis and explainable artificial intelligence.

\textbf{Keywords}:  TU Games, Interaction Index, Graph Restricted Games, Social Networks
\end{abstract}


\section{Introduction}
\label{sec:intro}

The concept of interaction lies at the heart of numerous disciplines, serving as a key mechanism to explain how entities - be they players, variables, or features - jointly influence outcomes in a system. For instance, in game theory, regression analysis, and machine learning, the study of interaction has evolved from basic pairwise effects to sophisticated representations of synergy, redundancy, and cooperation. A unifying perspective emerges from cooperative game theory, where 
 interaction is naturally associated with the additional value generated by cooperation. Early contributions by \cite{Ow72} introduced the idea of measuring how the presence of one player affects the marginal contribution of another. This idea was later significantly developed by \cite{GrRo99}, who proposed a general axiomatic framework for interaction indices based on the Shapley and Banzhaf values. Their approach provides a measure of the interaction effects of arbitrary order, providing a rich and interpretable representation of cooperative behaviour.

Beyond pure game theoretical interest, those interaction indices have found important applications in other areas. In multicriteria decision making \cite{Gr96} used them to model interactions between criteria, overcoming the limitations of additive aggregation models. Closely related ideas appear in the theory of fuzzy measures and capacities, where interaction indices provide a quantitative description of synergy and redundancy among attributes, with notable contributions including those by  \cite{Gr97a} and \cite{GrLa2016}, as well as applications discussed by \cite{CaInML2025}. In these contexts, interaction indices serve as a fundamental tool for representing complex preference structures and non-additive phenomena.

More recently, interaction measures have gained renewed relevance in the context of machine learning and, in particular, in the field of eXplainable Artificial Intelligence (XAI). Modern predictive black-models often capture complex, high-order interactions between features, but these interactions are typically implicit and they are not directly measured. Game-theoretic explanation methods, such as SHAP values, rely on the Shapley value and related interaction indices to provide model-agnostic explanations of both individual feature contributions and their interactions. This has led to the development of several Shapley-based interaction measures tailored to XAI (\cite{SuDhAg20} and \cite{TsYeRa23}), further highlighting the versatility and relevance of the cooperative game-theoretic framework.

In many real-world situations, however, cooperation or interaction is not unrestricted. Agents may only interact or coordinate through an underlying communication or interaction structure, which is naturally represented by a graph. In such settings, ignoring the network constraints may lead to a distorted assessment of both individual contributions and interaction effects. Myerson \cite{My77} addressed this issue by introducing graph-restricted games and the Myerson value, which adapts the Shapley value to situations with limited communication.

The aim of this paper is to extend the concept of interaction indices to cooperative games with restricted communication structures. Building on the Shapley Interaction Index and the Myerson restricted game, we introduce a graph-based interaction index that captures both the intrinsic complementarities of the underlying game and the constraints imposed by the network. We provide an axiomatic characterization of the proposed index and illustrate how the communication structure reshapes interaction patterns, giving rise to what we interpret as \emph{network -- induced interaction}. 

The Myerson interaction index provides a framework for studying interactions among agents, attributes, or features connected through relationships such as influence, compatibility, dependency, or cooperation. In particular, it can be applied to relevant problems in social networks, including link formation and community detection. In this setting, the proposed framework complements existing approaches based on interaction indices for games constructed from graphs, such as \cite{SzBaMiRa2015} and \cite{TaMiWool2019}. Rather than deriving the game itself from the graph structure, our approach studies interactions in cooperative games where feasible cooperation is restricted by a graph. In this way, it captures how relational constraints shape cooperative opportunities. Moreover, the index may also contribute to explainability in Artificial Intelligence when data are enriched with graph structures where features are represented as nodes and edges capture semantic, functional, or causal relationships. This enables richer analyses than traditional approaches based solely on independent features.

This article is organized as follows. Section \ref{sec:prelim} introduces the preliminary concepts from game theory and graph theory required for the remainder of the paper. Section \ref{sec:MII} presents the definition of a \emph{graph interaction index} and introduces the Myerson interaction index, together with its main properties and axiomatic characterization. Section \ref{sec:applic} is devoted to the notion of \emph{network-induced interaction}, which isolates the effect of the communication structure on interactions from the purely game-theoretic component. Finally, the last section concludes the paper.


\section{Preliminaries}
\label{sec:prelim}

In this section, the basic concepts about coalitional games, interaction indices and graphs are presented.

\subsection{Coalitional games}
\label{subsec:prelim_TU}

A \emph{coalitional game} with transferable utility (TU game) is a pair $(N,v)$ where $N = \{1,\dots,n\}$ is the set of players and $v$, the \emph{characteristic function}, is a map $v: 2^N \rightarrow \SR$, with $v(\emptyset) = 0$. For each coalition, $S \subseteq N$, $v(S)$ represents the maximum transferable utility that $S$ can be guaranteed to obtain whenever its members cooperate. For brevity, throughout the paper, the cardinality of sets (coalitions) $N$, $S$ and $C$ will be denoted by appropriate small letters $n$, $s$ and $c$, respectively. Also, for notational convenience, we will write singleton $\{i\}$ as $i$, when no ambiguity appears.

A game $(N,v)$ is \emph{superadditive} if $v(S \cup T) \geq v(S) + v(T)$, $\forall\, S,T \subseteq N$ with $S \cap T = \emptyset$, and it is \emph{convex} if $v(S \cup T) \geq v(S) + v(T) - v(S \cap T)$, $\forall\, S,T \subseteq N$.

Given a game $(N,v)$ and a subset $C \subseteq N$, $C \neq \emptyset$, the \emph{quotient game} with respect to $C$ is defined as the $(n-c+1)$-person game $\left([C] \cup (N \setminus C),v_{[C]}\right)$, where coalition $C$ is replaced by the single player $[C]$ which acts as a proxy $[C] \equiv C$, and $v_{[C]}$ is of the form
\begin{equation*}
  v_{[C]}(S) = v(S) \text{ and } v_{[C]}(S\cup [C]) = v(S\cup C), 
  \quad \forall\, S \subseteq N \setminus C.
\end{equation*}

Let $G^N$ be the class of all coalitional games $(N,v)$ with player set $N$. For notational convenience, we will write games $(N,v)$ as $v$, when no ambiguity arises. For each $T \in 2^N \setminus \{\emptyset\}$, the \emph{unanimity game} $u_T \in G^N$ is defined by
\begin{equation*}
  u_T(S) =
  \begin{cases}
    1, & \text{if } T \subseteq S \\ 
    0, & \text{otherwise}
  \end{cases},
  \quad \forall\, S \subseteq N.
\end{equation*}
The collection $\left\{u_T \,\vert\, T \in 2^N \setminus \{\emptyset\}\right\}$ is a basis of $G^N$ and for each $v \in G^N$ we have that
\begin{equation*}
  v = \sum_{T \in 2^N \setminus \{\emptyset\}} {\Delta_v(T)\,u_T}\text{,}
\end{equation*}
where $\left\{\Delta_v(T)\right\}_{T \in 2^N \setminus \{\emptyset\}}$ is the set of the unanimity coefficients of $v$ or the \emph{Harsanyi dividends} of $v$ \citep{Ha59}, which are given by
\begin{equation*}
  \Delta_v(T) = \sum_{S \subseteq T} {(\NEG{1})^{t-s}\,v(S)},
\end{equation*}
being $t = \vert T \vert$ and $s = \vert S \vert$.

As is well known, the \emph{Shapley value} of a game $v \in G^N$ \citep{Sh53} is an allocation rule $\phi: G^N \rightarrow \SR^N$ defined by
\begin{equation*}
  \phi_i(v) \coloneq
  \sum_{S \subseteq N:\, i \in S} {\frac{(s-1)!\,(n-s)!}{n!}\left(v(S) - v(S \setminus i)\right)}.
\end{equation*}
An alternative expression for this value in terms of the dividends is
\begin{equation*}
  \phi_i(v) =
  \sum_{S \subseteq N:\, i \in S} {\frac{\Delta_v(S)}{s}}.
\end{equation*}


\subsection{Shapley Interaction Index}
\label{subsec:prelim_SII}

An \emph{interaction index} $I^v$ of the game $v \in G^N$ is a function $I^v: 2^N \setminus \{\emptyset\} \rightarrow \SR$ which tries to reflect the degree of cooperation or interaction existing among players. In particular, a positive interaction value must show the interest of players to cooperate.

\cite{GrRo99} proposed an interaction index based on the Shapley value, called the \emph{Shapley Interaction Index} (SII). Given a game $v \in G^N$ and a player subset $S \subseteq N$, $S \neq \emptyset$, the SII is defined as
\begin{equation} \label{eq:SII_def_deriv}
  SI^v(S) \coloneq
  \sum_{T \subseteq N\setminus S} \frac{(n-t-s)!t!}{(n-s+1)!} \delta_S^v(T),
\end{equation}
where $\delta_S^v(T)$ denotes the \emph{$S$-derivative} of $v$ at $T$, and it is defined as
\begin{equation} \label{eq:S-deriv_def} 
 \delta_S^v(T) \coloneq \sum_{L \subseteq S} (-1)^{s-l} v(T\cup L),
 \quad \forall \, S\subseteq N,\; \forall\, T\subseteq N\setminus S.  
\end{equation}

\cite{GrRo99} gave alternative expressions for the SII. It can also be recursively determined as
\begin{equation*}
  SI^v(S) =
  SI^{v_{[S]}}([S]) - \sum_{K \subsetneq S,\, K \neq \emptyset} {SI^{v_{N \setminus K}}(S \setminus K)},
\end{equation*}
where $v_{[S]}$ is the quotient game with respect to $S$, $v_{N \setminus K}$ the restricted game to $N \setminus K$, and $SI^v(i) = \phi_i(v)$.

The SII can also be expressed in terms of Shapley group values \citep{FlMoTe19} as
\begin{equation*}
  SI^v(S) =
  \sum_{K \subseteq S,\, K \neq \emptyset} {(\NEG{1})^{s-k}\,\phi_{[K]}(v_{(N \setminus S) \cup [K]})},
\end{equation*}
where $\phi_{[K]}(v_{(N \setminus S) \cup [K]})$ is the Shapley value of the proxy player $[K]$ in the quotient game $v_{(N \setminus S) \cup [K]}$, which is precisely the \emph{Shapley group value} of coalition $K$. 

Finally, in terms of the Harsanyi dividends, the SII may be computed as
\begin{equation} \label{eq:SII_def_divid}
  SI^v(S) =
  \sum_{T \in 2^N \setminus \{\emptyset\}} {\Delta_v(T)\,SI^{u_T}(S)} =
  \sum_{T \supseteq S} {\frac{\Delta_v(T)}{t-s+1}},
\end{equation}
where the SII for unanimity games \citep{Gr97a} is
\begin{equation} \label{eq:SII_def_unanim}
  SI^{u_T}(S) =
  \begin{cases}
    \frac{1}{t-s+1}, & \text{if } S \subseteq T \\
    0, & \text{otherwise}
  \end{cases},
  \quad \forall\, T \in 2^N \setminus \{\emptyset\}.
\end{equation}


\subsection{Graphs}
\label{subsec:prelim_graphs}

Let $\G = (N,E)$ be an undirected graph, where $N$ is the set of $n$ nodes and $E$ is the set of edges, that is, $E \subseteq \left\{\{i,j\}:\, i,j \in N, i \neq j\right\}$. In addition, let $\cG^N$ denote the class of all undirected graphs with node set $N$, and $\cG^{\G}$ the collection of all subgraphs of $\G$. The \emph{subgraph of $\G$ induced by $S \subseteq N$} is $\G_S = (S,E[S]) \in \cG^{\G}$ where $E[S] = \left\{\{i,j\} \in E:\, i,j \in S\right\}$.

A {\em path} $\mu[i_1,i_r]$ from node $i_1$ to node $i_r$ is a sequence of different nodes $(i_1,\ldots,i_r)$, $r\geq 2$, and edges $E[\mu[i_1,i_r]]=(e_1,e_2,\dots,e_{r-1})$, s.t. $e_h=\{i_h,i_{h+1}\}\in E$ , for all $h=1,\ldots,r-1$.  

A graph is \emph{connected} if every pair of its nodes $i,j \in N$ they are connected directly or indirectly, i.e. if there is a path in the graph from node $i$ to node $j$; otherwise, the graph is \emph{not connected}. The relation of connectivity induces a partition of the node set $N$ into \emph{connected components}, with two nodes being in the same connected component if and only if they are connected. Let $\cK_{\G}$ denote the collection of connected components of graph $\G$. Moreover, for the sake of simplicity, $\cK_{\G}(S)$ shall designate the collection of connected components of graph $\G_S$. For every node $i\in N$, $\cK_{\G}(i)\in \cK_{\G}$ denotes the connected component of the graph $\G$ to which node $i$ belongs. 

Given graph $\G = (N,E)$ and $S \subseteq N$, $S \neq \emptyset$, a subgraph $\G_R = (R,E[R]) \in \cG^{\G}$ is called a \emph{minimal $S$-connecting subgraph} if it connects $S$ and any $\G_{R'} = (R',E[R'])$ with $R' \subsetneq R$ does not connect $S$. Let $\cM_{\G}(S)$ denote the collection of minimal $S$-connecting subgraphs, which will be described by the sets of nodes of those subgraphs, i.e. $\cM_{\G}(S) = \{S_1,\dots,S_{\ell}\}$. 

Furthermore, the set of \emph{intermediaries} of $S$ in $\G$, which will be denoted by $B_{\G}(S)$, is defined as  
\begin{equation*}
  B_{\G}(S) \coloneq \left(\bigcup_{m = 1}^{\ell} {S_m}\right) \setminus S;
\end{equation*}
while the set of \emph{essential intermediaries} of $S$ in $\G$, which will be denoted by $EB_{\G}(S)$, is defined as
\begin{equation*}
  EB_{\Gamma}(S) \coloneq \left(\bigcap_{m = 1}^{\ell} {S_m}\right) \setminus S.
\end{equation*}
Note that if $\G$ is a tree then $\cM_{\G}(S)$ has a unique element $(H_{\G}(S),E[H_{\G}(S)])$. In the sequel, $H_{\G}(S)$ will be referred to as the \emph{convex hull} of $S$ in $\G$.

To conclude, given a graph $\G = (N,E)$ and a subset $C \subseteq N$, $C \neq \emptyset$, the \emph{quotient graph} with respect to $C$ is defined as the graph $\G_{[C]}$ where subset $C$ is replaced by the single node $[C]$, which is adjacent to any other vertex if and only if this is adjacent to some node in $C$. That is, $\G_{[C]} = \left([C] \cup (N \setminus C),E_{[C]}\right)$ where $E_{[C]} = E[N \setminus C] \cup \left\{\{[C],j\}:\, \{i,j\} \in E \text{ with } i \in C\right\}$.


\section{Myerson Interaction Index}
\label{sec:MII}

Following \cite{My77}, we consider now a \emph{communication situation} given by a triplet $(N,v,\G)$, being $(N,v)$ a superadditive TU game, and $\G=(N,E)$ an undirected communication graph without loops or parallel arcs, describing the cooperation opportunities available to the players. Let $CS^N$ denote the class of all communication situations with player set $N$.

Myerson's basic idea was that players may cooperate by forming a series of bilateral agreements that are only possible among pairs of linked players. Thus, only connected coalitions can arise. In this setting, identifying all possible cooperation structures on $N$ with $\cG^N$ (the set of all graphs with node set $N$), he defined an \emph{allocation rule} for game $v$ as any function $\Upsilon:\cG^N \to \mathbb{R}^N$ such that $\forall\, \G \in \cG^N$ holds
\begin{equation} \label{eq:alloc_comp_effic}
  \sum_{i \in C} \Upsilon_i(\G) = v(C),
  \quad \forall\, C \in \cK_{\G}.
\end{equation}

Moreover, he defined the \emph{restricted game} $v^{\G}$ of the game $v$ by the graph $\G$, which is given by 
\begin{equation*}
  v^{\G}(S) \coloneq \sum_{C \in \cK_{\G}(S)} v(C),
  \quad \forall S \subseteq N;
\end{equation*}
and what is now called the \emph{Myerson allocation rule} as the Shapley value of the graph restricted game, which is the allocation rule $\mu^v:\cG^N \rightarrow \SR^N$ for game $v$ defined by
\begin{equation*}
  \mu_i^v(\G) \coloneq \phi_i(v^{\G}), \,\forall\, i\in N.
\end{equation*}

Now, following Myerson, we extend interaction indices to communication situations as follows.
\begin{definition}[Graph interaction index]
  Given a game $v \in G^N$, we define a \emph{graph interaction index} for game $v$ to be any function $GI^v:\cG^N \rightarrow \cI^v$, where $\cI^v$ is the class of all interaction indices of game $v$, such that $\forall\, \G \in \cG^N$ satisfies
  \begin{equation}\label{eq:inter_comp_indep}
    \begin{array}{c}
      GI^{v,\G}(S) = 0, \quad\forall S \subseteq N \text{ such that } \nexists C \in \cK_{\G} \text{ with } S \subseteq C \text{, and} \\[2mm]
      GI^{v,\G}(S) = GI^{v\vert_C,\G_C}(S), \quad\forall S \subseteq C \text{ with } C \in \cK_{\G},
    \end{array}
  \end{equation}
  where $GI^{v,\G}(S)$ stands for $[GI^v(\Gamma)](S)$ and $v\vert_C$ for the game $v$ restricted to the set of players $C$.
\end{definition}

Note that condition \eqref{eq:inter_comp_indep} imposes that there is no interaction between groups of individuals belonging to unconnected worlds, which is in accordance with condition \eqref{eq:alloc_comp_effic} imposing no externality between groups of players in different connected components.

Consequently, we extend the Shapley Interaction Index to situations with restricted communication as follows.
\begin{definition}[Myerson interaction index]
  Given a game $v \in G^N$, we define the \emph{Myerson Interaction Index} for $v$ as the graph interaction index $MI^v$ determined $\forall\, \G \in \cG^N$ by $MI^{v,\G}(S) \coloneq SI^{v^{\G}}(S)$, $\forall\, S\subseteq N$. 
\end{definition}

Let us check that the Myerson  interaction index, $MI$, defined above verifies condition \eqref{eq:inter_comp_indep} and, accordingly, that it is indeed a graph interaction index. 

\begin{proposition} \label{prop:MII_gii}
  Let $v \in G^N$ be a game, then MII is a graph interaction index. In particular, given graph $\G \in \cG^N$, it is verified:
  \begin{enumerate}[label=(\roman*)]
    \item If $S$ is not contained in a unique connected component of $\G$, then $MI^{v,\Gamma}(S) = 0$.
    \item If $S \subseteq C$ for some $C \in \cK_{\G}$, then $MI^{v,\Gamma}(S) = MI^{v\vert_C,\G_C}(S)$.
  \end{enumerate}
\end{proposition}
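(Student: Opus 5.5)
The natural route is through the Harsanyi dividends of the restricted game $v^{\G}$, combined with the dividend formula \eqref{eq:SII_def_divid}:
\begin{equation*}
MI^{v,\G}(S)=SI^{v^{\G}}(S)=\sum_{T\supseteq S}\frac{\Delta_{v^{\G}}(T)}{t-s+1}.
\end{equation*}
The key step is a support lemma: $\Delta_{v^{\G}}(T)=0$ whenever $\G_T$ is not connected. Both parts of the proposition then follow from this formula.

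To prove the lemma, I would argue by induction on $t$ using the Möbius characterization $v^{\G}(R)=\sum_{T\subseteq R}\Delta_{v^{\G}}(T)$. Define $w(T)=\Delta_{v}(T)$-type coefficients as follows: for connected $T$, set $d(T)$ by Möbius inversion of $v$ restricted to connected sets,
\begin{equation*}
d(T)=v(T)-\sum_{T'\subsetneq T,\ \G_{T'}\text{ connected}}d(T'),
\end{equation*}
and set $d(T)=0$ otherwise. Then for any $R$,
\begin{equation*}
\sum_{T\subseteq R}d(T)=\sum_{C\in\cK_{\G}(R)}\ \sum_{T\subseteq C,\ \G_T\text{ connected}}d(T),
\end{equation*}
since a connected $T\subseteq R$ lies inside a single component of $\G_R$. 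By construction the inner sum equals $v(C)$. Hence $\sum_{T\subseteq R}d(T)=v^{\G}(R)$, and uniqueness of dividends gives $\Delta_{v^{\G}}=d$. This is Myerson's classical observation and is the only real work; the remaining steps are bookkeeping.

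For (i): if $S$ is not inside one component of $\G$, then every $T\supseteq S$ meets two components, so $\G_T$ is disconnected and $\Delta_{v^{\G}}(T)=0$. Thus $MI^{v,\G}(S)=0$.

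For (ii): let $S\subseteq C\in\cK_{\G}$. In the sum, only connected $T\supseteq S$ contribute, and those satisfy $T\subseteq C$. So
\begin{equation*}
MI^{v,\G}(S)=\sum_{S\subseteq T\subseteq C}\frac{\Delta_{v^{\G}}(T)}{t-s+1}.
\end{equation*}
It remains to identify $\Delta_{v^{\G}}(T)$ with $\Delta_{(v|_C)^{\G_C}}(T)$ for $T\subseteq C$. This holds because dividends depend only on values on subsets of $T$, and for $R\subseteq C$ we have $v^{\G}(R)=(v|_C)^{\G_C}(R)$: the components of $\G_R$ coincide with those of $(\G_C)_R$. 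The right-hand side is then \eqref{eq:SII_def_divid} applied in the player set $C$, giving $MI^{v|_C,\G_C}(S)$.

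The coefficient $1/(t-s+1)$ depends only on $t$ and $s$, not on $n$, so changing the ambient player set causes no issue. Together, (i) and (ii) are exactly condition \eqref{eq:inter_comp_indep}, so $MI$ is a graph interaction index.
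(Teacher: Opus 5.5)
Your proof is correct and follows the paper's argument. Both expand $MI^{v,\G}(S)=SI^{v^{\G}}(S)$ through the dividend formula \eqref{eq:SII_def_divid}, discard every $T\supseteq S$ with $\G_T$ disconnected, and for (ii) identify $\Delta_{v^{\G}}(T)$ with $\Delta_{(v\vert_C)^{\G_C}}(T)$ for $T\subseteq C$ because the two restricted games agree on subsets of $C$. The only difference is that you prove the vanishing of dividends on disconnected coalitions yourself, by Möbius inversion over connected sets, where the paper cites Theorem 2 and expression (8) of \cite{Ow86}; this makes your version self-contained.
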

\begin{proof}
  \vphantom{a}
  \begin{enumerate}[label=(\roman*)]
    \item This follows from expression $\eqref{eq:SII_def_divid}$ of the SII. It holds that
          \begin{equation*}
            MI^{v,\G}(S) = SI^{v^{\G}}(S) = \sum_{T \supseteq S} \frac{\Delta_{v^{\G}}(T)}{(t-s+1)} = 0,
          \end{equation*}
          since every $T$ containing $S$ must be disconnected being $S$ not contained in a unique connected component and, thus, $\Delta_{v^{\Gamma}}(T) = 0$, $\forall\, T \supseteq S$, according to Theorem 2 and expression (8) in \cite{Ow86}.
    \item We consider again expression $\eqref{eq:SII_def_divid}$ of the SII. In this case
          \begin{equation*}
            MI^{v,\G}(S) = SI^{v^{\G}}(S) = \sum_{T \supseteq S} \frac{\Delta_{v^{\G}}(T)}{(t-s+1)} = \sum_{T \supseteq S,\ T \subseteq C} \frac{\Delta_{v^{\G}}(T)}{(t-s+1)},
          \end{equation*}
          since every $T$ containing $S$ not contained in $C$ must be disconnected and, thus, $\Delta_{v^{\G}}(T) = 0$, $\forall\, T \supset C$, in line with the previous argument. Within the set of players $C$, it holds that $v^{\G}(T) = [v\vert_C]^{\G_C}(T)$, $\forall\, T \subseteq C$. Therefore, $\Delta_{v^{\G}}(T) = \Delta_{[v\vert_C]^{\G_C}}(T)$, $\forall\, T \subseteq C$, which results in
          \begin{equation*}
            MI^{v,\G}(S) = \sum_{T \supseteq S,\ T \subseteq C} \frac{\Delta_{[v\vert_C]^{\G_C}}(T)}{(t-s+1)} = SI^{[v\vert_C]^{\G_C}}(S) = MI^{v\vert_C,\G_C}(S).
          \end{equation*}
  \end{enumerate}
\end{proof}

Note that the dividends $\Delta_{v^\G}(S)$ vanish whenever $S$ is disconnected in $\G$, and they depend solely on the induced subgraph $\G_S$. Hence, by using the expression of $(N,u_S^\G)$ given in Proposition~2.4 of \cite{GoGoMaOwPoTe03} together with expression (8) in \cite{Ow86}, the non-zero dividends of the graph-restricted game can be written in terms of the dividends of the original game as follows\footnote{For clarity, we assume that $\G$ is connected. Otherwise, replace $\G$ by $\G_C$, where $C$ is the connected component of $\G$ containing $S$.}:
\begin{equation}
\Delta_{v^\G}(S)
= \sum_{\displaystyle\substack{T\subseteq S \\ S=\cup_{m\in M_T(S)} T_m}}
(-1)^{|M_T(S)|+1} \,\Delta_v(T),
\label{relacion-dividendos-general}
\end{equation}
where $\cM_{\G}(T)=\{T_1,\dots,T_{\ell_T}\}$ denotes the family of minimal $T$-connecting subgraphs, and $M_T(S)\subseteq \{1,\dots,\ell_T\}$ is the set of indices of those minimal sets whose union equals $S$, i.e., $S=\cup_{m\in M_T(S)} T_m$. In fact, we can restrict to $\cM_{\G_S}(T)$.

Thus, $\Delta_{v^\G}(S)$ is a linear combination of the dividends of the original game associated with those coalitions that $S$ connects in a minimal way.

If $\G$ is a tree, the family $\cM_{\G}(T)$ contains a unique element for every $T$, and expression~\eqref{relacion-dividendos-general} reduces to
\begin{equation}
\Delta_{v^\G}(S)
= \sum_{L \subseteq \cC(S)} \Delta_v\big(S\setminus L\big),
\label{relacion-dividendos-tree}
\end{equation}
where $\cC(S)$ is the set of cutnodes of $S$, i.e., those nodes in $S$ whose removal disconnects $S$.


\subsection{Properties}
\label{subsec:MII_prop}

In order to propose interesting properties which consider simultaneously games on different player sets we extend the notion of graph interaction index, from an interaction rule for a given game $v \in G^N$ to an index defined over the class of all TU games with a finite set of players. Formally, let $\cU = \{1,2,\dots\}$ be the \emph{universe of players} and $\cN$ the class of all non-empty finite subsets of $\cU$. In addition, let $G = \bigcup_{N \in \cN}{G^N}$ be the set of all characteristic functions. We extend the notion of \emph{graph interaction index} $GI$ to be an assignation rule which associates each game $(N,v) \in G$ with a graph interaction index for $v$, i.e. with a function $GI^v:\cG^N \rightarrow \cI^v$ satisfying condition \eqref{eq:inter_comp_indep}.

First, some usual definitions such as that of dummy player and partnership \citep{KaSa87} for TU games must be properly adapted to this new setting to incorporate the effect of the communication structure.

Note that it has been assumed that there is no interaction between groups of players in different connected components. Therefore, in the sequel, we will restrict our attention to connected graphs without loss of generality. If the underlying communication graph $\G$ is disconnected, definitions and results apply to each connected component.

    \begin{definition}[Graph null  player]
  Let $(N,v,\G)$ be a communication situation. Then, $i \in N$ is a \emph{graph  null  player} in $(N,v,\G)$ if and only if:
  \begin{itemize}
      \item[$(GNi)$] $i$ is a {\it null  player} in the original game $v$, i.e. $v(S \cup i) = v(S)$, $\forall\, S \subseteq N \setminus i$; and
         \item[$(GNii)$] $i$ is a {\it null  node} in the communication graph $\Gamma$, i.e. for every pair $j\neq k\in N\setminus i$ with $i\in B_\G(\{j,k\})$, either $j\in D_0(v)$ or $k\in D_0(v)$, where $D_0(v)\subseteq N$ denotes the set of null  players of the game $v$.
  \end{itemize}
  \label{definicion-graph-dummy-fuerte}
\end{definition}
Let $D_0^{\G}(v) \subseteq N$ denote the set of all graph null  players in communication situation $(N,v,\G)$. 
 In line with the definition of \emph{superfluous player} in a graph game as a null player in the restricted game provided by \cite{SlNo12}, we further develop the above definition of graph null  player and its relation with null  players of the graph restricted game $(N,v^\G)$.

It is easy to show (see Proposition~\ref{PropGraphDummyDummyGRG}) that every player $i\in N$ who is graph null  in $(N,v,\G)$ is also a null  player in the corresponding graph restricted game $(N,v^\G)$. However, being a null  player in $(N,v^\G)$ does not imply to be a graph null . It could be the case, see examples in Example~\ref{examples_graphDummy} below, that $i$ is isolated from other players necessary to generate value, and thus is a null  in $(N,v^\G)$ without being a null  player in $(N,v)$, and thus is not a graph null  player either.

\begin{example} To illustrate the previous definition we may consider some examples which involve null  players from the original game becoming relevant in the graph restricted game and viceversa:
\begin{itemize}
  \item 
    Consider $N=\{1,2,3,4\}$, and $v(i)=0$, $\forall\, i\in N$, $v(\{1,2\})=v(\{2,3\})=v(\{1,3\})=v(\{1,2,3\})=0$, and $v(S)=2$, for the remaining coalitions, which is a superadditive game. If $\G=\{ \{1,2\}, \{2,3\}, \{3,4\}\}$, then player 2 is null  in $(N,v^\G)$, but it is not graph null  according to Definition \ref{definicion-graph-dummy-fuerte} (player 2 is not null  in the original game $(N,v)$, negating condition $(GNi)$).
      \item Consider $N = \{1,2,3\}$ and the unanimity game of coalition $\{2,3\}$. Clearly, player $1$ is null  in this game. However, if we consider the communication situation associated with graph $\G = \{\{1,2\},\{1,3\}\}$, the graph restricted game becomes the unanimity game of coalition $N$ in which player $1$ is not null  thanks to being an intermediary of the coalition $\{2,3\}$ with positive contribution (negating condition $(GNii)$.
  \item Consider $N = \{1,2,3,4\}$ and the unanimity game of coalition $\{2,3,4\}$. Clearly, player $1$ is the only null  in this game. However, if we consider the communication situation associated with graph $\G = \{\{1,2\},\{2,3\}\}$, the graph restricted game becomes the null game as coalition $\{2,3,4\}$ is unattainable within a connected coalition, which corresponds with every player being isolated from other players necessary to generate value.
  However, note that in this case, since we analyse each connected component independently, all players become graph-null  according to Definition \ref{definicion-graph-dummy-fuerte}, since all them  fulfil conditions $(GNi)$ and $(GNii)$ for the game restricted to the connected component of $\G$ to which each player belongs.
 \end{itemize}  
    \label{examples_graphDummy}
\end{example}


\begin{proposition}
    Let $(N,v,\G)$ be a communication situation. If $i \in N$ is a graph null  player, then it is also a null  player in the graph restricted game $(N,v^\G)$.
    \label{PropGraphDummyDummyGRG}
\end{proposition}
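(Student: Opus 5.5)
The plan is to verify directly that $v^{\G}(S\cup i)=v^{\G}(S)$ for every $S\subseteq N\setminus i$, working with the component decomposition that defines $v^{\G}$. Fix such an $S$ and compare $\cK_{\G}(S)$ with $\cK_{\G}(S\cup i)$. Components of $\G_S$ that contain no neighbour of $i$ remain components of $\G_{S\cup i}$. Let $C_1,\dots,C_r$ ($r\ge 0$) be the components of $\G_S$ that contain a neighbour of $i$. In $\G_{S\cup i}$ these merge, together with $i$, into the single component $C=\{i\}\cup C_1\cup\dots\cup C_r$. Hence
\begin{equation*}
  v^{\G}(S\cup i)-v^{\G}(S)=v(C)-\sum_{a=1}^{r} v(C_a),
\end{equation*}
and it suffices to show that the right-hand side vanishes. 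By $(GNi)$ we have $v(C)=v(C_1\cup\dots\cup C_r)$, so the task reduces to showing
\begin{equation*}
  v(C_1\cup\dots\cup C_r)=\sum_{a=1}^{r} v(C_a).
\end{equation*}
When $r=0$ both sides are zero, since $v(\emptyset)=0$ and $v(i)=v(\emptyset)=0$.

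The key step uses $(GNii)$ and is the main point to check carefully. Take $j\in C_a$ and $k\in C_b$ with $a\neq b$; I claim $i\in B_{\G}(\{j,k\})$. Since $C_a$ is connected and contains a neighbour of $i$, there is a path from $j$ to $i$ whose other nodes all lie in $C_a$; similarly there is a path from $i$ to $k$ inside $C_b\cup i$. Their union is a node set $R\subseteq C$ with $\G_R$ connecting $j$ and $k$.

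Every subset $R'\subseteq R$ with $\G_{R'}$ connecting $j$ and $k$ must contain $i$. Indeed, $C_a$ and $C_b$ are distinct components of $\G_S$, so no edge joins them, and $R\setminus i\subseteq C_a\cup C_b$. Choose an inclusion-minimal such $R'\subseteq R$. Its minimality is a property of subsets of $R'$ only, so $R'\in\cM_{\G}(\{j,k\})$. Since $i\in R'\setminus\{j,k\}$, it follows that $i\in B_{\G}(\{j,k\})$. Then $(GNii)$ gives $j\in D_0(v)$ or $k\in D_0(v)$.

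It follows that at most one of the sets $C_a$, say $C_1$, can contain a non-null player of $v$. Otherwise, picking non-null players in two different $C_a$'s would contradict the previous paragraph. All the remaining $C_b$, $b\ge 2$, therefore consist entirely of null players of $v$.

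Removing null players one at a time never changes the value of a coalition. Applying this to the players of $C_2\cup\dots\cup C_r$ gives $v(C_1\cup\dots\cup C_r)=v(C_1)$. Applying it to each $C_b$ with $b\ge 2$ gives $v(C_b)=v(\emptyset)=0$. Hence $\sum_a v(C_a)=v(C_1)$, which closes the argument.

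If $\G$ is disconnected, the same argument is run inside the connected component containing $i$, in line with the convention adopted before Definition~\ref{definicion-graph-dummy-fuerte}. This is legitimate because the component decomposition of $S\cup i$ only involves that component.
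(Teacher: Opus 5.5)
Your proof is correct, but it takes a different route from the paper. The paper works with Harsanyi dividends. It writes $v^{\G}(S\cup i)=\sum_{L\subseteq S}\bigl(\Delta_{v^{\G}}(L)+\Delta_{v^{\G}}(L\cup i)\bigr)$ and shows that $\Delta_{v^{\G}}(L\cup i)=0$ for every nonempty $L$. To do so it invokes the imported formula \eqref{relacion-dividendos-general}, which expresses $\Delta_{v^{\G}}$ through minimal connecting subgraphs, and splits the contributing coalitions $T$ into two cases. If $i\in T$, the dividend vanishes by $(GNi)$. If $i\notin T$, then $i$ lies in some minimal $T$-connecting set, hence is an intermediary of some pair $j,k\in T$, and the dividend vanishes by $(GNii)$.

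You instead compute the marginal contribution $v^{\G}(S\cup i)-v^{\G}(S)=v(C)-\sum_a v(C_a)$ directly from the component decomposition. You then reduce everything to the observation that at most one component of $\G_S$ adjacent to $i$ can contain a non-null player of $v$.

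Your route is self-contained and elementary: it uses only the definitions of $v^{\G}$ and $B_{\G}$ and needs no external result on dividends of graph-restricted games. The graph-theoretic step you verify carefully is the same kind of reasoning the paper asserts without justification in its case (ii). That step is that an inclusion-minimal $\{j,k\}$-connecting subset of $R$ must contain $i$ and automatically belongs to $\cM_{\G}(\{j,k\})$. Your remark on disconnected graphs is also sound, since only subsets of $i$'s component enter the difference.

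The paper's route is shorter once \eqref{relacion-dividendos-general} is available. It also delivers the conclusion in dividend form (all dividends of coalitions containing $i$ vanish). That form matches how the result is used later, in the dividend-based treatment of the SII and MII.
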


\begin{proof}
Let $i\in N$ be a graph null  node, and $S\subseteq N\setminus i$, then
$$
v^\G(S\cup i)=\sum_{L \subseteq S} \Bigl (\Delta_{v^\G} (L) + \Delta_{v^\G} (L\cup i) \Bigr ).
$$
Now, we will show that $\Delta_{v^\G} (L\cup i)=0$, for all $\emptyset \neq L\subseteq N\setminus i$ and, thus, trivially $v^\G(S\cup i)=v^\G(S) + v^\G (i)=v^\G(S) + v(i)=v^\G(S)$ and the result holds.

Let $\emptyset \neq L\subseteq S$ be a given coalition, then taking into account expression \eqref{relacion-dividendos-general} it follows that $\Delta_{v^\G} (L\cup i)$ is a linear combination of the dividends in the original game $(N,v)$ of coalitions $T\subseteq L\cup i$ with $L\cup i=\cup_{m \in M_T(L\cup i)} T_m$ for some $M_T(L\cup i)\subseteq \{1,\dots, \ell_T\}$, being $\cM_{\G}(T) = \{T_1,\dots,T_{\ell_T}\}$ the collection of minimal $T$-connecting subgraphs. Then, one of the two following cases must hold:
\begin{itemize}
    \item[$(i)$] $i\in T$, then $\Delta_v(T)=0$ since $i$ is a null  player in $(N,v)$ and $\vert T\vert \geq 2$ (otherwise $\cM_{\G}(T) =\{ T\}\neq L\cup i$, since $L\neq \emptyset$).
      \item[$(ii)$]  $i\notin T$, then there exist $j,k\in T$, $j\neq k$ with $i\in B_\G(\{j,k\})$. Thus $\Delta_v(T)=0$ since $j\in D_0(v)$ or $k\in D_0(v)$ and $\vert T\vert \geq 2$.
\end{itemize}
    \end{proof}

With regard to partnerships $P\subseteq N$ in the original game, we are interested on a special kind of graph partnerships which we refer to as {\it veto graph partnerships}, where the remaining players are unable to generate value without them. Thus, it should be noted that all players who are strictly necessary to connect them can be included in the partnership when the communication structure is taken into account. Formally:

\begin{definition}[Veto Graph Partnership] Let $(N,v,\Gamma)$, then a non-empty $GP\subseteq N$ is a {\it veto graph partnership} if there exists a {\it veto partnership} $P\neq \emptyset$ in $(N,v)$, i.e. $v(T\cup S)=v(T)=0$, for all  $T\subseteq N\setminus P$, and all $S\subsetneq P$, such that $\emptyset \neq GP\subseteq P\cup  EB_\Gamma(P)$.
\end{definition}

Members of $P$ behave in the game like a single hypothetical player, since they are a partnership in the game $(N,v)$. On the other hand, all members of $L\subseteq EB_\G(P)$ are indispensable to connect $P$ and contribute nothing in the game $(N,v)$ if some player of $P$ is not present.
Thus, in the communication structure the members of $P\cup L$ act as a single hypothetical player and therefore the original game should be equivalent to the game in which the coalition $P\cup L$ is replaced by a player acting as its proxy. It is worth noting, however, that being a veto graph partnership does not mean that the structure of the communication will be changed.

Now, with regard to graph partnerships, we could consider its natural relation with partnerships in the graph-restricted game, as we have done with graph dummy players. We will show that being a veto graph partnership implies being a veto partnership in the graph-restricted game. However, being a veto partnership in $(N,v^\G)$ does not imply to be a veto graph partnership (see Example~\ref{CEx_partner})

\begin{proposition}
    Let $(N,v,\G)$ be a communication situation. If $GP\subseteq N$ is a veto graph partnership, then it is also a partnership in the graph restricted game $(N,v^\G)$.
    \label{PropVetoGraphPartnershipGRG}
\end{proposition}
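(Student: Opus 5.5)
We need to show that $GP$ is a veto partnership of $v^\G$, i.e. $v^\G(T\cup S)=v^\G(T)=0$ for all $T\subseteq N\setminus GP$ and all $S\subsetneq GP$. Write $Q=P\cup EB_\G(P)$, so that $GP\subseteq Q$, and take $\G$ connected, as the paper allows. The plan is to reduce everything to the restricted-game formula $v^\G(R)=\sum_{C\in\cK_\G(R)}v(C)$. We then show that whenever a coalition $R$ misses some member of $GP$, no connected component $C$ of $\G_R$ can contain all of $P$. After that, the veto property of $P$ in $v$ forces every summand $v(C)$ to vanish.

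\textbf{Key step.} Fix $R\subseteq N$ with $GP\not\subseteq R$, and pick $g\in GP\setminus R$. Let $C\in\cK_\G(R)$; we claim $P\not\subseteq C$. We split on the position of $g$.

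\emph{Case $g\in P$.} Then $g\notin C$ because $C\subseteq R$, so $P\not\subseteq C$ immediately.

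\emph{Case $g\in EB_\G(P)\setminus P$.} By definition, $g$ lies in every minimal $P$-connecting subgraph $S_m\in\cM_\G(P)$. Suppose $P\subseteq C$. Since $\G_C$ is connected, $C$ connects $P$. Every finite set connecting $P$ contains a minimal $P$-connecting set: shrink it while it still connects $P$, and the process stops at a minimal one. Hence some $S_m\subseteq C$, which gives $g\in C\subseteq R$, a contradiction. So again $P\not\subseteq C$.

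One detail in this step needs care: for $|P|=1$ we must fix the conventions, namely $\cM_\G(P)=\{P\}$ and $EB_\G(P)=\emptyset$. The case is then trivial.

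\textbf{Conclusion from the veto property.} Each $C\in\cK_\G(R)$ now has the form $C=T'\cup S'$ with $T'=C\setminus P\subseteq N\setminus P$ and $S'=C\cap P\subsetneq P$. The veto partnership property of $P$ in $v$ gives $v(C)=0$. Summing over components yields $v^\G(R)=0$ for every $R$ that misses some member of $GP$.

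Apply this with $R=T\cup S$, where $T\subseteq N\setminus GP$ and $S\subsetneq GP$. Such an $R$ misses the elements of $GP\setminus S\neq\emptyset$, so $v^\G(T\cup S)=0$. Taking $S=\emptyset$ gives $v^\G(T)=0$. Hence $v^\G(T\cup S)=v^\G(T)=0$, which is exactly the veto partnership condition for $GP$ in $v^\G$. In particular $GP$ is a partnership of $v^\G$.

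The expected obstacle is the case $g\in EB_\G(P)\setminus P$: one has to use essentiality carefully, through the existence of a minimal connecting subset inside any connecting set. The rest is routine bookkeeping.
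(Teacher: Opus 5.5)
Your proof is correct and follows the same route as the paper. Like the paper, you write $v^\G(T\cup S)$ as a sum over the components $C$ of $\G_{T\cup S}$, show that $C\cap P\subsetneq P$ for each $C$, and then invoke the veto property of $P$ in $v$; the only difference is that you spell out what the paper compresses into its parenthetical ``otherwise $P\cup EB_\G(P)\subseteq T\cup S$'', namely that a component containing $P$ contains a minimal $P$-connecting set and hence all essential intermediaries.
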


\begin{proof}
Let $\emptyset \neq GP\subseteq P\cup EB_\G(P)$, for some veto partnership $P$ of $(N,v)$. Then, for any $T\subseteq N\setminus GP$, and  all $S\subsetneq GP$, it is verified:
$$
v^\G(T\cup S)=\sum_{C\in {\cal K}_\G(T\cup S)} v(C)=\sum_{C\in {\cal K}_\G(T\cup S)} 
v((C\cap P)\cup (C\setminus P)),
$$
with $(C\cap P)\subsetneq P$ (otherwise $P\cup EB_\G(P)\subseteq T\cup S$). Thus, $v((C\cap P)\cup (C\setminus P))=0$, for all $C\in {\cal K}_\G(T\cup S)$ and $GP$ is also a veto partnership in $(N,v^\G)$.
\end{proof}

\begin{example} Consider the 4 player superadditive game $(N,v)$ with $v(\{1,3\}) = v(\{1,4\}) = v(\{1,2,3\}) = v(\{1,2,4\}) = v(\{1,3,4\}) = v(N) = 1$, where
there are not veto partnerships. If we consider $\G = \{\{1,2\},\{2,3\},\{3,4\}\}$, then $\{1,2,3\}$ is a veto partnership in the graph restricted game $(N,v^\G)$, but is not a veto graph partnership.
    \label{CEx_partner}
\end{example}

We may now propose relevant properties for the characterisation of the MII.

\begin{definition} 
  Let $GI$ be a graph interaction index defined over the set of all finite games $G$ that associates to each game $(N,v) \in G$ the graph interaction index $GI^v$ given by $[GI^v(\G)](S) \equiv I^{v,\G}(S)$, $\forall S \subseteq N$. Then, $GI$ verifies:
  \begin{itemize}
    \item[($I$-CE)] \emph{$I$-Component Efficiency}, if and only if
         \begin{equation*}
           \sum_{i \in C} {I^{v,\G}(i)} = v(C),\quad\forall C \in \cK_{\G},
         \end{equation*}
         for any game $(N,v) \in G$ and graph $\G \in \cG^N$;

  \item[($I$-GN)] \emph{$I$-Graph Null}, if and only if
         \begin{equation*}
             I^{v,\G}(S \cup i) = 0,\quad\forall S \subseteq N \setminus i, 
         \end{equation*}
         for any graph null  player $i \in D_0^\G(v)$, game $(N,v) \in G$ and graph $\G \in \cG^N$;

    \item[($I$-F)] \emph{$I$-Fairness}, if and only if
         \begin{equation*}
           I^{v,\G}(S \cup i)-I^{v,\G_{-\{i,j\}}}(S \cup i) = I^{v,\G}(S \cup j)-I^{v,\G_{-\{i,j\}}}(S \cup j),\quad\forall S \subseteq N \setminus \{i,j\},
         \end{equation*}
         for any edge $\{i,j\} \in E$, game $(N,v) \in G$ and graph $\G = (N,E) \in \cG^N$, where $\G_{-\{i,j\}} = (N,E \setminus \{i,j\})$;

    \item[($I$-SRVPC)] \emph{$I$-Strong Reduced Veto Partnership Consistency}, if and only if
          \begin{equation*}
           I^{v,\G}(P \cup S) = I^{v^\G_{[P]},\G_{[P]}}([P] \cup S),\quad\forall S \subseteq N \setminus P,
         \end{equation*}
         for any veto graph partnership $P \subseteq N$, game $(N,v) \in G$ and graph $\G \in \cG^N$, where $v^\G_{[P]}$ and $\G_{[P]}$ are, respectively, the quotient of the graph restricted game\footnote{$v^\G_{[P]} (S)=v^\G(S)$, and $v^\G_{[P]} (S\cup [P])=v^\G(S\cup P)$, $\forall\, S\subseteq N\setminus P$. Note that $v^\G_{[P]}=v^{\G_{[P]}}$ if $P$ is connected in $\G$.} and quotient graph with respect to $P$;    
    \item[($I$-L)] \emph{$I$-Linearity}, if and only if
         \begin{equation*}
           I^{\alpha_1\,v_1+\alpha_2\,v_2,\G}(S) = \alpha_1\,I^{v_1,\G}(S) + \alpha_2\,I^{v_2,\G}(S),\quad\forall S \subseteq N,
         \end{equation*}
         for any weights $\alpha_1, \alpha_2 \in \SR$, games $(N,v_1), (N,v_2) \in G$ with same player set $N$ and graph $\G \in \cG^N$.
\end{itemize}
\end{definition}

$I$-CE is based on the strong component efficiency by \cite{My77} and the idea that the interaction of singletons should be their Myerson value, which \cite{GrRo99} proof for the SII with respect to the Shapley value. Recall that any graph interaction index verifies that there is no interaction between individuals belonging to unconnected worlds, so that $I^{v,\G}(i) = I^{v\vert_C,\G_C}(i)$, $\forall i \in C$ with $C \in \cK_{\G}$ (as per condition \ref{eq:inter_comp_indep}).

Property $I$-GN extends the dummy player axiom considered in \cite{GrRo99} and \cite{FuKoMa06} to the case of restricted communication, while requiring it only for those dummy players that are also null players.

$I$-F, which together with $I$-SRVPC plays a crucial role in the axiomatic characterisation of the Myerson interaction index, generalises the property of fairness that characterises the Myerson allocation rule. It states the equal-gains principle of equity considered by \cite{My77} -- ``two players should gain equally from their bilateral agreement'' -- over the interaction of these two players $i$ and $j$ with any other group $C \subseteq N \setminus \{i,j\}$. Note that the original property is a particular case of $I$-F for $C = \emptyset$.

$I$-SRVPC is a slightly stronger version of the reduced partnership consistency axiom introduced in \cite{FuKoMa06} to characterise the Shapley and Banzhaf interaction indices. As the authors argue -- ``the interaction among the players of a partnership $P$ in a game $v$ should be regarded as the value of the reduced partnership $[P]$ in the corresponding quotient game $v_{[P]}$''.  
We extend the same reasoning to the interaction of players in a veto graph partnership $P$ with the remaining players in $N \setminus P$, whenever the partnership was a veto one.

$I$-L is the usual linearity axiom considered by \cite{GrRo99}, and also by \cite{FuKoMa06}.

\begin{proposition} \label{prop:MII_propert}
  The Myerson interaction index satisfies all properties $I$-CE, $I$-GN, $I$-F, $I$-SRVPC and $I$-L.
\end{proposition}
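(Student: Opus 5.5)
We check the five properties one at a time. Each verification goes through $MI^{v,\G}=SI^{v^{\G}}$ and the known properties of the Shapley interaction index from \cite{GrRo99} and \cite{FuKoMa06}.

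\emph{Linearity and component efficiency.} For $I$-L, the map $v\mapsto v^{\G}$ is linear, since $v^{\G}(S)=\sum_{C\in\cK_\G(S)}v(C)$ is a fixed linear combination of values of $v$. The SII is linear in the game by \eqref{eq:SII_def_divid}, so the composition is linear. For $I$-CE, we use $SI^{w}(i)=\phi_i(w)$. This gives $MI^{v,\G}(i)=\phi_i(v^{\G})=\mu_i^v(\G)$, and the component efficiency of the Myerson value \citep{My77} yields $\sum_{i\in C}MI^{v,\G}(i)=v(C)$ for every $C\in\cK_\G$.

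\emph{Graph null player.} For $I$-GN, Proposition~\ref{PropGraphDummyDummyGRG} shows that a graph null player $i$ is a null player of $v^{\G}$. Its proof in fact shows $\Delta_{v^{\G}}(L\cup i)=0$ for all nonempty $L$, and also $\Delta_{v^\G}(i)=v(i)=0$. Then \eqref{eq:SII_def_divid} gives $MI^{v,\G}(S\cup i)=\sum_{T\supseteq S\cup i}\Delta_{v^\G}(T)/(t-s)=0$, since every such $T$ contains $i$.

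\emph{Fairness.} For $I$-F, we work with dividends. Let $w=v^{\G}-v^{\G_{-\{i,j\}}}$. I claim $\Delta_w(T)=0$ whenever $T$ does not contain both $i$ and $j$. Indeed, if $\{i,j\}\not\subseteq T$, then $\G_T$ and $(\G_{-\{i,j\}})_T$ coincide. As noted after Proposition~\ref{prop:MII_gii}, the dividends of a graph-restricted game on $T$ depend only on the induced subgraph on $T$, so the two dividends agree. Hence $w=\sum_{T\supseteq\{i,j\}}\Delta_w(T)u_T$. By \eqref{eq:SII_def_divid}, $SI^w(S\cup i)=\sum_{T\supseteq S\cup\{i,j\}}\Delta_w(T)/(t-s)$, and the same expression holds for $S\cup j$. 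Linearity of the SII then gives the required equality.

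\emph{Strong reduced veto partnership consistency.} For $I$-SRVPC, Proposition~\ref{PropVetoGraphPartnershipGRG} says a veto graph partnership $P$ is a veto partnership of $w:=v^{\G}$. A veto partnership has the property that $w(T\cup S)=0$ whenever $S\subsetneq P$. This forces $\Delta_w(R)=0$ unless $P\subseteq R$, so $w=\sum_{R\supseteq P}\Delta_w(R)u_R$. In the quotient game, $w_{[P]}=\sum_{R\supseteq P}\Delta_w(R)\,u_{(R\setminus P)\cup[P]}$. The dividend formula then gives, for $S\subseteq N\setminus P$,
\[
SI^{w}(P\cup S)=\sum_{R\supseteq P\cup S}\frac{\Delta_w(R)}{r-p-s+1}=SI^{w_{[P]}}([P]\cup S).
\]
This is exactly the reduced partnership consistency of \cite{FuKoMa06}. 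It remains to identify $SI^{w_{[P]}}$ with $MI^{v^\G_{[P]},\G_{[P]}}=SI^{(v^\G_{[P]})^{\G_{[P]}}}$. So we must show that restricting the quotient game $v^\G_{[P]}$ again by $\G_{[P]}$ does not change it.

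\emph{Main obstacle.} This identification is the step I expect to be hardest. When $P$ is connected, the footnote gives $v^\G_{[P]}=v^{\G_{[P]}}$, and graph restriction is idempotent, so the claim follows. In general, I would argue directly on dividends. Only coalitions $R\supseteq P$ carry dividends, and for those with $\Delta_w(R)\neq 0$ the set $R$ is connected in $\G$, since dividends of $v^\G$ vanish on disconnected sets. The quotient coalition $(R\setminus P)\cup[P]$ is then connected in $\G_{[P]}$. Consequently, each unanimity game $u_{(R\setminus P)\cup[P]}$ is invariant under restriction by $\G_{[P]}$, which proves the claim.
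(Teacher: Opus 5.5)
Your proof is correct and follows essentially the same route as the paper. Each axiom is checked through $MI^{v,\G}=SI^{v^{\G}}$ and the dividend formula \eqref{eq:SII_def_divid}, using Propositions~\ref{PropGraphDummyDummyGRG} and~\ref{PropVetoGraphPartnershipGRG} for $I$-GN and $I$-SRVPC, and the invariance of dividends on sets not containing both $i$ and $j$ for $I$-F. The one addition is your explicit argument that $(v^\G_{[P]})^{\G_{[P]}}=v^\G_{[P]}$ (every coalition with a nonzero dividend contains $P$ and is connected, so its contraction is connected in $\G_{[P]}$), which the paper's proof uses without comment; your argument for it is sound.
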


\begin{proof}
  Let us arbitrarily fix the player set $N$, game $v \in G^N$ and graph $\G \in \cG^N$.
  \begin{itemize}
    \item[$I$-CE:] First note that, from expression \eqref{eq:SII_def_deriv} of the SII, it is straightforward that $SI^v(i)$ is the Shapley value of player $i$ in game $v$, $\forall i \in N$. Therefore, $SI^{v^{\G}}(i)$ is the Shapley value of player $i$ in game $v^{\G}$, i.e. it is its Myerson value $\mu_i^v(\G) = \phi_i(v^{\G})$, which verifies condition \eqref{eq:alloc_comp_effic}. Thus, $\forall C \in \cK_{\G}$, it follows
    \begin{equation} \label{eq:comp_consist} 
      \sum_{i \in C} {MI^{v,\G}(i)} = \sum_{i \in C} {SI^{v^{\G}}(i)} = \sum_{i \in C} {\phi_i(v^{\G})} = \sum_{i \in C} {\mu_i^v(\G)} = v(C).
    \end{equation}
    Hence, $I$-Component Efficiency holds.

    \item[$I$-GN:] Now, note that every graph  null player is a  null player in the graph restricted game $v^\G$ and SII verifies dummy axiom \citep{GrRo99}. Thus, $\forall i \in D_0^\G(v)$, it follows
    \begin{equation*}
      \begin{array}{c}
        MI^{v,\G}(S \cup i) = SI^{v^{\G}}(S \cup i) = 0,\quad\forall S \subseteq N \setminus i, S \neq \emptyset \text{, and} \\[2mm]
        MI^{v,\G}(i) = SI^{v^{\G}}(i) = v^{\G}(i) = v(i)=0.
      \end{array}
    \end{equation*}
    Hence, $I$-Graph null  holds.
    
    \item[$I$-F:] Recall expression $\eqref{eq:SII_def_divid}$ of the Shapley Interaction Index:
    \begin{equation*}
      MI^{v,\G}(S) \coloneq SI^{v^{\G}}(S) = \sum_{T \supseteq S} \frac{\Delta_{v^{\G}}(T)}{t-s+1}
    \end{equation*}
    Thus, for any edge $\{i,j\} \in E$ and $S \subseteq N \setminus \{i,j\}$, it holds
    \begin{equation*}
      MI^{v,\G}(S \cup i) - MI^{v,\G_{\NEG{\{i,j\}}}}(S \cup i) = \sum_{T \supseteq S \cup i} \frac{\Delta_{v^{\G}}(T)-\Delta_{v^{\G_{\NEG{\{i,j\}}}}}(T)}{t-s}.
    \end{equation*}
    Note that $\Delta_{v^{\G}}(T) = \Delta_{v^{\G_{\NEG{\{i,j\}}}}}(T)$ for any subset $T$ such that $i \notin T$ or $j \notin T$, as edge $\{i,j\}$ is not relevant in its communication structure. Therefore,
    \begin{equation*}
      MI^{v,\G}(S \cup i) - MI^{v,\G_{\NEG{\{i,j\}}}}(S \cup i) = \sum_{T \supseteq S \cup \{i,j\}} \frac{\Delta_{v^{\G}}(T)-\Delta_{v^{\G_{\NEG{\{i,j\}}}}}(T)}{t-s}.
    \end{equation*}
    Analogously, it holds
    \begin{equation*}
      MI^{v,\G}(S \cup j) - MI^{v,\G_{\NEG{\{i,j\}}}}(S \cup j) = \sum_{T \supseteq S \cup \{i,j\}} \frac{\Delta_{v^{\G}}(T)-\Delta_{v^{\G_{\NEG{\{i,j\}}}}}(T)}{t-s},
    \end{equation*}
    so $I$-Fairness is satisfied by MI.
    
    \item[$I$-SRVPC:]        
    Let $P \subseteq N$ be a veto graph partnership and $S \subseteq N \setminus P$. As before, from expression $\eqref{eq:SII_def_divid}$ of the Shapley Interaction Index follows
    \begin{equation*}
      MI^{v,\G}(P \cup S) = \sum_{T \supseteq P \cup S} \frac{\Delta_{v^{\G}}(T)}{t-s-p+1}.
    \end{equation*}
    Proposition \ref{PropVetoGraphPartnershipGRG} showed that $P$ is a partnership in the graph restricted game $(N,v^{\G})$, so $\Delta_{v^{\G}}(L \cup P) = \Delta_{v_{[P]}^{\G}}(L \cup [P])$ for any subset $L \subseteq N\setminus P$. In addition, note that $|L \cup [P]| = |L \cup P| - p + 1$. Therefore,
    \begin{equation*}
      MI^{v,\G}(P \cup S) = \sum_{T \supseteq [P] \cup S} \frac{\Delta_{v_{[P]}^{\G}}(T)}{t-s} = MI^{v^\G_{[P]},\G_{[P]}}([P] \cup S).
    \end{equation*}
    Hence, $I$-Strong Reduced Veto Partnership Consistency holds.

    \item[$I$-L:] Finally, note that SII verifies linearity \citep{GrRo99}. Since the graph restriction operation is linear, $\forall\alpha_1, \alpha_2 \in \SR$, $\forall v_1, v_2 \in G^N$, it follows
    \begin{equation*}
      \begin{array}{r c l}
        MI^{\alpha_1\,v_1+\alpha_2\,v_2,\G}(S) & = & SI^{(\alpha_1\,v_1+\alpha_2\,v_2)^{\G}}(S) \\
        & = & \alpha_1\,SI^{v_1^\G}(S) + \alpha_2\,SI^{v_2^\G}(S) \\
        & = & \alpha_1\,MI^{v_1,\G}(S) + \alpha_2\,MI^{v_2,\G}(S),
      \end{array}
      \quad\forall S \subseteq N.
    \end{equation*}
    Hence, $I$-Linearity holds.
\end{itemize}

\end{proof}

\begin{theorem}\label{th:MII_charact}
The unique graph interaction index over the set of all communication structures with finite games in $G$ verifying $I$-CE, $I$-F,$I$-GN, $I$-L and $I$-SRVPC is the Myerson interaction index MI.

\end{theorem}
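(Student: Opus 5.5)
Existence is Proposition~\ref{prop:MII_propert}, so only uniqueness remains. Let $GI$ be any graph interaction index satisfying the five axioms. By $I$-L it suffices to show that $I^{v,\G}$ is determined on a basis of $G^N$ for every fixed $\G$; we use the unanimity games $u_T$. By condition \eqref{eq:inter_comp_indep} we may assume $\G$ connected, since otherwise we pass to each component $\G_C$ with $v\vert_C$. The argument then proceeds by induction on the number of edges $|E|$ and, for fixed $\G$, on $|S|$.

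First the singletons. For $S=\{i\}$, $I$-CE together with $I$-F for $S=\emptyset$ are exactly Myerson's component efficiency and fairness. By Myerson's classical uniqueness argument, induction on $|E|$ gives $I^{v,\G}(i)=\mu^v_i(\G)=MI^{v,\G}(i)$. Concretely, fix $\G$ and assume uniqueness for all graphs with fewer edges. Fairness then determines $I^{v,\G}(i)-I^{v,\G}(j)$ for every edge $\{i,j\}$, so it is determined along each component, and component efficiency fixes the remaining constant. The empty graph is handled directly by CE.

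Next, for $|S|\ge 2$, we again induct on $|E|$. Suppose $S$ contains two adjacent players $i,j$, and write $S=S'\cup i$ with $j\in S'$. Applying $I$-F with the set $S'\setminus j$ relates $I^{v,\G}(S'\cup i)$ to $I^{v,\G}(S'\setminus j\cup\{i,j\})$; these are the same set, so this yields nothing. We therefore use $I$-F with sets of different sizes instead. For an edge $\{i,j\}$ and $R\subseteq N\setminus\{i,j\}$, the differences $I^{v,\G}(R\cup i)-I^{v,\G}(R\cup j)$ are determined by the graph $\G_{-\{i,j\}}$, which is known by induction. Consequently, for each fixed $R$ the values on the sets $R\cup k$, for $k$ ranging over a connected set of nodes outside $R$, are determined up to one additive constant per connected piece. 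These constants must be fixed by the remaining axioms, and this is where the real difficulty lies. To fix them we use $I$-GN and $I$-SRVPC on $v=u_T$. If $S\not\subseteq T\cup B_\G(T)$, some player of $S$ is graph null, so $I^{u_T,\G}(S)=0$. If $S\subseteq T\cup EB_\G(T)$, then $T$ is a veto partnership of $u_T$. Taking $P=T$, or more generally $P\subseteq T\cup EB_\G(T)$, $I$-SRVPC reduces $I^{u_T,\G}(S)$ to an index on the quotient situation $(u_T^\G)_{[P]}$ with strictly fewer players. There it becomes a singleton value $[P]$ when $S\subseteq P$, and singleton values are already known to equal Myerson values. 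We then run an outer induction on $n$ so that quotients are covered.

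The main obstacle is the case of nonessential intermediaries, $S\cap (B_\G(T)\setminus EB_\G(T))\neq\emptyset$, where no partnership absorbs them. There the plan is to delete an edge on one of the alternative minimal $T$-connecting paths. Fairness links $\G$ to $\G_{-\{i,j\}}$, which is known by the edge induction, and the resulting additive constant is pinned down by choosing the companion node $j$ to be graph null or to lie in an absorbable partnership, where the value is already determined. Once $I^{u_T,\G}$ is determined for all $T$, linearity gives $I^{v,\G}=MI^{v,\G}$, since $MI$ satisfies the same axioms. I expect this last step, choosing edges and partners so that every additive constant is anchored, to need a careful case analysis over the structure of $\cM_\G(T)$ and to be the hardest part of the proof.
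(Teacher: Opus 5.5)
Your outline follows the paper's proof: existence from Proposition~\ref{prop:MII_propert}, reduction to $u_T$ by $I$-L, Myerson's argument for singletons, $I$-GN for graph null players, $I$-SRVPC with an outer induction on $n$, and chains of $I$-F equations anchored at known values. Inducting on $|E|$ instead of on the number of cycles is sound and slightly simpler, because every term $GI^{u_T,\G_{-e}}(\cdot)$ in a fairness equation is known outright. That removes the need for the paper's unicyclic base case. However, the decisive step is only announced, and with the anchors you list it cannot be completed.

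\textbf{First missing point.} You use $I$-SRVPC only when $S\subseteq T\cup EB_\G(T)$, and you treat every $S$ containing a non-essential intermediary as hard. The paper's Case~2 is needed here. Put $P=T\cup EB_\G(T)$ and $P'=S\cap P$. Then $I$-SRVPC applies to $P'$ whenever $|P'|\geq 2$, whatever $S\setminus P$ is. The other members of $S$ just become ordinary players of a quotient game on $n-|P'|+1<n$ players, which the outer induction covers. Without this, anchors need not exist:
\begin{itemize}
\item Take the $4$-cycle with edges $\{t_1,a\},\{a,t_2\},\{t_2,b\},\{b,t_1\}$ and $T=\{t_1,t_2\}$.
\item Then $B_\G(T)=\{a,b\}$ and $EB_\G(T)=\emptyset$. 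No player is graph null, and the only veto graph partnerships are the non-empty subsets of $T$.
\item The four $3$-sets are linked by four $I$-F equations. These form a cycle of differences of rank $3$, so one constant stays free.
\item That constant can only be fixed by applying $I$-SRVPC to $\{t_1,t_2\}$ inside $\{a,t_1,t_2\}$ or $\{b,t_1,t_2\}$. This is exactly the situation you set aside.
\end{itemize}

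\textbf{Second missing point.} You still need the explicit chain, which is the content of the paper's Cases~3 and~4. Suppose $|S\cap P|\leq 1$ and $S$ has no graph null player.
\begin{itemize}
\item Choose $j\in S\setminus P$ and a path $j=i_1,\ldots,i_r=t_O$ with $t_O\in P\setminus S$ and $i_2,\ldots,i_r\notin S$.
\item Keep $R=S\setminus\{j\}$ fixed and apply $I$-F to each edge $\{i_h,i_{h+1}\}$ with the set $R$.
\item This gives a bidiagonal system in the unknowns $GI^{u_T,\G}(R\cup i_h)$, $h<r$. Its right-hand sides are known by your edge induction, and its last equation is anchored at $R\cup t_O$.
\item $R\cup t_O$ has one more player of $P$ than $S$. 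It is settled by Case~2 if $|S\cap P|=1$, and by the case $|S\cap P|=1$ on the same graph if $S\cap P=\emptyset$.
\end{itemize}

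\textbf{Why the path exists.} Start from any $j_0\in S\setminus P$. Since $j_0$ is not graph null, (GNii) makes it an intermediary of a pair in $T$, so it lies on a simple path between two players of $T$. Walk from $j_0$ along this path in a direction where the first player of $P$ reached is not in $S$. One of the two directions works because $|S\cap P|\leq 1$. The last player of $S$ before that point serves as $j$.

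\textbf{A shared inaccuracy.} Your first case repeats a flaw in the paper's Case~1. As (GNii) is worded, $S\not\subseteq T\cup B_\G(T)$ does not force a graph null player in $S$. In the $4$-cycle with edges $\{1,2\},\{2,3\},\{3,x\},\{x,1\}$ and $T=\{1,2,3\}$, one has $B_\G(T)=\emptyset$. Yet $x\in B_\G(\{1,3\})$ with $1,3\in T$, so $x$ is not graph null. A set such as $\{2,x\}$ must therefore go through the chain above: the single edge $\{x,1\}$ with $R=\{2\}$, anchored at $\{1,2\}$.
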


\begin{proof}
We have proved that the properties hold for the Myerson interaction index $MI$ in Proposition~\ref{prop:MII_propert}. Therefore, we are left with the question of \textbf{uniqueness}.

Let $GI$ be a graph interaction index verifying $I$-CE, $I$-F, $I$-GN, $I$-L and $I$-SRVPC. First, we prove the uniqueness for first-order interactions. Fix a player set $N \in \cN$ and a game $(N,v)\in G^N$. By $I$-CE and $I$-F, the allocation rule defined by
\begin{equation*}
  \Upsilon_i(\G) = GI^{v,\G}(\{i\}), \quad\forall i \in N,
\end{equation*}
satisfies component efficiency and fairness. Hence, it must coincide with the Myerson value, which is itself the first-order Myerson interaction.

To prove the uniqueness for higher-order interactions, we rely on $I$-linearity. Since the family $\{(N,u_S):\,S \subseteq N, S \neq \emptyset\}$ forms a basis of $G^N$ for all $N \in \cN$, it suffices to consider unanimity games $(N,u_T)$ with $T \in 2^N \setminus \{\emptyset\}$. Thus, we will show that
\begin{equation} \label{resultado-interaccS-unanimT}
  GI^{(N,u_T),\G}(S) = MI^{(N,u_T),\G}(S) \coloneq SI^{(N,u_T^\G)}(S),
\end{equation}
for all $S \subseteq N$ with $|S| \geq 2$ and $N \in \cN$.

If $|T| = 1$, that is, for dictatorial games $(N,u_{\{i\}})$, every player $j \neq i$ is graph null. Hence, by $I$-GN,
\begin{equation*}
  GI^{(N,u_{\{i\}}),\G}(S) = 0 = MI^{(N,u_{\{i\}}),\G}(S),\quad \forall S \subseteq N \text{ with } |S| \geq 2,
\end{equation*}
because $S \neq \{i\}$ in any case. Thus, \eqref{resultado-interaccS-unanimT} holds. 

Assume now that $|T| \geq 2$. We proceed by complete induction on the cardinality of the player set $N \in \cN$.

First, we prove \eqref{resultado-interaccS-unanimT} whenever $|N| = 2$, so we only need to check uniqueness for second-order interactions ($|S| = 2$) and the grand coalition unanimity game ($T = N$). Let $(N,u_N)$ be a two-player unanimity game with $N = \{i,j\} = S$. Then $\cG^N = \{\G_1,\G_2\}$, where $\G_1 = (N,E_1)$ with $E_1 = \emptyset$ and $\G_2 = (N,E_2)$ with $E_2 = \{\{i,j\}\}$.
\begin{itemize}
  \item In the case $(N,u_N,\G_1)$, it follows
  \begin{equation*}
    GI^{(N,u_N),\G_1}(\{i,j\}) = 0 = MI^{(N,u_N),\G_1}(\{i,j\})
  \end{equation*}
  by condition \eqref{eq:inter_comp_indep} in the definition of graph interaction indices, as $S = \{i,j\}$ is not connected in $\G_1$. 
  \item In the case $(N,u_N,\G_2)$, the set $N$ is a veto graph partnership. By $I$-SRVPC and uniqueness of first-order interactions, it follows
  \begin{equation*}
    GI^{(N,u_N),\G_2}(N) = GI^{([N],u_{[N]}),\G_{2,[N]}}([N]) =
    MI^{([N],u_{[N]}),\G_{2,[N]}}([N]) = MI^{(N,u_N),\G_2}(N).
  \end{equation*}
  Note that $([N],u^{\G_2}_{[N]}) \equiv ([N],u_{[N]})$, since $\G_2$ is the complete graph of $N$. 
\end{itemize}

Now, let $N \in \cN$ with $|N| = n > 2$, and assume that \eqref{resultado-interaccS-unanimT} holds for all player sets $N'$ with $|N'| < n$. By the induction hypothesis and the results for first-order interactions and dictatorial games, for every unanimity game $(N',u_T)$ with $|N'| < n$,
\begin{equation*}
  GI^{(N',u_T),\G}(S) = MI^{(N',u_T),\G}(S) \coloneq SI^{(N',u_T^\G)}(S), \quad
  \forall S \in 2^{N'} \setminus \{\emptyset\} \text{ and } \G \in \cG^{N'}.
\end{equation*}
Hence, by $I$-L, it holds
\begin{equation}\label{hip-induc-vgeneral}
  GI^{(N',v),\G}(S) = MI^{(N',v),\G}(S) \coloneq SI^{(N',v^\G)}(S),
\end{equation}
for all games $(N',v)\in G^{N'}$. Let us prove \eqref{resultado-interaccS-unanimT} for all non-empty $T,S \subseteq N$ with $|T|,|S| \geq 2$.

Fix now $T \subseteq N$ with $|T| \geq 2$ and a graph $\G \in \cG^N$. If $\G$ is not connected, condition \eqref{eq:inter_comp_indep} implies the following:
\begin{itemize}
  \item If there is no $C \in \cK_{\G}$ such that $T \subseteq C$, then $GI^{u_T,\G}(S) = 0 = MI^{u_T,\G}(S)$ for all $S \in 2^N \setminus \{\emptyset\}$.
  \item Otherwise, let $C_T$ be the connected component of $\G$ containing $T$. Given $S \in 2^N \setminus \{\emptyset\}$, then $GI^{u_T,\G}(S) = 0 = MI^{u_T,\G}(S)$ for all $S \not\subseteq C_T$, and
  \begin{equation*}
    GI^{u_T,\G}(S) = GI^{u_T\vert_{C_T},\G_{C_T}}(S) = MI^{u_T\vert_{C_T},\G_{C_T}}(S) = MI^{u_T,\G}(S),
  \end{equation*}
  for all $S \subseteq C_T$, where \eqref{hip-induc-vgeneral} links $GI$ and $MI$.
\end{itemize}
Thus, it suffices to consider connected graphs $\G$. 

Let $S \subseteq N$ with $|S| \geq 2$. We show that
\begin{equation}\label{probarST}
  GI^{(N,u_T),\G}(S) = MI^{(N,u_T),\G}(S).
\end{equation}
For simplicity, as $T$ is fixed, henceforth we will write $v = u_T$.

\noindent\textbf{Case 1.}
If $S \not\subseteq T \cup B_{\G}(T)$, then $S$ contains graph null players and, by $I$-GN, $GI^{v,\G}(S) = 0 = MI^{v,\G}(S)$.

Subsequently, we will assume $S \subseteq T \cup B_{\G}(T)$. Let $P \coloneq T \cup EB_{\G}(T)$, which forms a veto graph partnership, and denote $S_P = S \cap P$ and $S_{\NEG{P}} = S \setminus P$.

\noindent\textbf{Case 2.}
If $|S_P| \geq 2$, by $I$-SRVPC with respect to veto graph partnership $S_P \subseteq P$ it holds
\begin{equation*}
  GI^{v,\G}(S) = GI^{v^\G_{[S_P]},\G_{[S_P]}}([S_P]\cup S_{\NEG{P}}) =
  MI^{v^\G_{[S_P]},\G_{[S_P]}}([S_P]\cup S_{\NEG{P}}) = MI^{v,\G}(S),
\end{equation*}
where equality between $GI$ and $MI$ follows from the induction hypothesis \eqref{hip-induc-vgeneral} since $|[N \setminus S_P] \cup \{[S_P]\}| \leq n-1$.

\noindent\textbf{Case 3.}
If $S_P = \{t_I\}$ with $t_I \in T \cup EB_{\G}(T)$, so that $|S_P| = 1$, in order to prove \eqref{probarST} we proceed by a secondary induction, relating this case to the previous one. 

The argument is based on a path construction that allows one to relate the interaction of the coalition $S$ to that of a coalition of the form $S \setminus \{j\} \cup \{t_O\}$, where $j \in S$ is a non-essential intermediary of $T$ and $t_O \in P$ is an essential player. Along such a path, the axioms $I$-GN and $I$-F generate a square linear system of equations whose solution is unique and results in the desired equality. The proof proceeds by a complete induction on the number $k$ of cycles in graph $\G$.

We first prove that $GI^{v,\G}(S) = MI^{v,\G}(S)$, for every $S \subseteq T \cup B_{\G}(T)$ with $S_P = \{t_I\}$ and $|S| \geq 2$, whenever $\G$ has a unique cycle ($k = 1$). Note that $B_{\G}(T) = EB_{\G}(T)$ in the case $k = 0$, so that $S = S_P$ and $|S_P| = 1$ is incompatible with $|S| \geq 2$.

Since $|S|,|T| \geq 2$ and $S_P = \{t_I\}$, $\exists j' \in S_{\NEG{P}}$ that is a non-essential intermediary of $T$ (because $S \subseteq T \cup B_\G(T)$). Hence, $\exists t_1,t_2 \in T \cup EB_{\G}(T) = P$ such that the minimal path $\mu[t_1,t_2] = (t_1,\ldots,j',\ldots,t_2)$ connecting $t_1$ and $t_2$ through $j'$ consists only of non-essential intermediaries of $T$ with the exception of both endpoints (possibly a subpath of a minimal path between two nodes of $T$ for which $j'$ intermediates). Without loss of generality, assume $t_I \notin \mu[j',t_2]$, and let $j \in S_{\NEG{P}}$ be the last node of $S$ along the path $\mu[j',t_2]$ and $t_O = t_2$. Then, we have determined a minimal path $\mu[j,t_O] = (i_1=j,\ldots,i_r=t_O)$ connecting $j$ and $t_O$ where all nodes except $j$ do not belong to $S$ and all vertices except $t_O$ are non-essential intermediaries of $T$. Note that $\mu[j,t_O]$ is a subpath of a path $\mu[t_1,t_2]$ connecting $t_1$ and $t_2$ where all intermediaries are non-essential. Thus, there must be an alternative path connecting $t_1$ and $t_2$ which does not include any of the intermediaries in $\mu[t_1,t_2]$. This implies that $\mu[j,t_O]$ is part of the unique cycle in $\G$.

Fix such $j \in S_{\NEG{P}}$ and $t_O \in P \setminus S$, and let $E[\mu[j,t_O]] = \{e_1,\ldots,e_{r-1}\}$ where $e_h = \{i_h,i_{h+1}\}$ $\forall 1 \leq h \leq r-1$. Consider $e_h$ with $1 \leq h \leq r-2$. When $e_h$ is removed, $i_h$ and $i_{h+1}$ become graph-null players. Obviously, none of them can become essential intermediaries when removing one of their incident edges. Moreover, if they remained non-essential intermediaries of $T$, there would exist a cycle in $\G_{-e_h}$ containing them, which contradicts the existence of a unique cycle in $\G$ since $e_h$ belongs to it as previously noted\footnote{ In fact, every non-essential intermediary $i_1,\ldots,i_{r-1}$ becomes graph-null when an edge in $\mu[j,t_O]$ is removed.}. Analogously, when $e_{r-1}$ is removed, $i_{r-1}$ becomes a graph-null player.

Therefore, by $I$-GN, it holds
\begin{align*}
  GI^{v,\G_{-e_h}}(S \setminus \{j\} \cup \{i_h\}) & = 0 = MI^{v,\G_{-e_h}}(S \setminus \{j\} \cup \{i_h\}), \\
  GI^{v,\G_{-e_h}}(S \setminus \{j\} \cup \{i_{h+1}\}) & = 0 = MI^{v,\G_{-e_h}}(S_{-j} \cup \{i_{h+1}\}),
\end{align*}
for all $1 \leq h \leq r-2$. Knowing this, and applying $I$-F to each edge $e_h$, we obtain equations
\begin{equation}\label{th:eq_first}
  \begin{array}{r}
    GI^{v,\G}(S \setminus \{j\} \cup \{i_h\}) - GI^{v,\G}(S \setminus \{j\} \cup \{i_{h+1}\}) = 0, \\
    MI^{v,\G}(S \setminus \{j\} \cup \{i_h\}) - MI^{v,\G}(S \setminus \{j\} \cup \{i_{h+1}\}) = 0,
  \end{array}
\end{equation}
for every $1 \leq h \leq r-2$.

For the last edge $e_{r-1} = \{i_{r-1},t_O\}$, $I$-GN yields
\begin{equation*}
  GI^{v,\G_{-e_{r-1}}}(S \setminus \{j\} \cup \{i_{r-1}\}) = 0 = MI^{v,\G_{-e_{r-1}}}(S_{-j} \cup \{i_{r-1}\}).
\end{equation*}
Note that $S \setminus \{j\} \cup \{t_O\}$ contains a two-player veto graph partnership in $(N,v,\G)$, namely $\{t_I,t_O\}$. Then, according to Case~2, it follows
\begin{equation*}
  GI^{v,\G}(S \setminus \{j\} \cup \{t_O\}) = MI^{v,\G}(S \setminus \{j\} \cup \{t_O\}) \eqcolon \sigma_O.
\end{equation*}
Moreover,
\begin{equation*}
  GI^{v,\G_{-e_{r-1}}}(S \setminus \{j\} \cup \{t_O\}) = MI^{v,\G_{-e_{r-1}}}(S \setminus \{j\} \cup \{t_O\}) \eqcolon \sigma_O^-,
\end{equation*}
since $S \setminus \{j\} \cup \{t_O\}$ also contains a two-player veto graph partnership in $(N,v,\G_{-e_{r-1}})$; otherwise, edge $e_{r-1}$ would be essential to connect $\{t_I,t_O\}$, contradicting the fact that $i_{r-1}$ is a non-essential intermediary of $T$. Altogether, applying $I$-F to edge $e_{r-1}$ results in equations
\begin{equation}\label{th:eq_last}
  \begin{array}{r}
    GI^{v,\G}(S \setminus \{j\} \cup \{i_{r-1}\}) = \sigma_O - \sigma_O^-, \\
    MI^{v,\G}(S \setminus \{j\} \cup \{i_{r-1}\}) = \sigma_O - \sigma_O^-.
  \end{array}
\end{equation}

Equations \eqref{th:eq_first} and \eqref{th:eq_last} in either $GI^{v,\G}(S \setminus \{j\} \cup \{i_h\})$ or $MI^{v,\G}(S \setminus \{j\} \cup \{i_h\})$, with $1 \leq h \leq r-1$, form a square linear system of $r-1$ equations with right-hand-side vector $\mathbf{b}^t = (0,\ldots,0,\sigma_O-\sigma_O^-)$ and upper bidiagonal coefficient matrix with determinant equal to 1. Hence, the solution is unique. In particular, for $i_1 = j$, we may conclude the result $GI^{v,\G}(S) = MI^{v,\G}(S)$ that we wanted to prove.

To conclude the proof of this case, we assume that there are $k > 1$ cycles in graph $\G$ and show \eqref{probarST}. By the $k$-induction hypothesis, we may assume that the result holds whenever the number of cycles in graph $\G$ is strictly smaller than $k$. We will proceed as before, by means of selecting a minimal path $\mu[j,t_O]$ in the previous conditions, i.e. $j \in S_{\NEG{P}}$ and $t_O \in P \setminus \{t_I\}$ such that every other node in the path is a non-essential intermediary of $T$ not in $S$. The same reasoning applies, but in this case the right-hand-side vector of the linear system of equations has non-zero entries
\begin{equation*}
  \begin{array}{r c l}
    b_h & \coloneq &
    GI^{v,\G_{-e_h}}(S \setminus \{j\} \cup \{i_h\}) - GI^{v,\G_{-e_h}}(S \setminus \{j\} \cup \{i_{h+1}\}) \\
    & = & MI^{v,\G_{-e_h}}(S \setminus \{j\} \cup \{i_h\}) - MI^{v,\G_{-e_h}}(S \setminus \{j\} \cup \{i_{h+1}\}),
  \end{array}
\end{equation*}
for all $1 \leq h \leq r-2$, where the equality between $GI$ and $MI$ follows from the $k$-induction hypothesis, since removing edge $e_h$ strictly decreases the number of cycles in graph $\G$. In addition,
\begin{equation*}
  \begin{array}{r c l}
    b_{r-1} & \coloneq &
    \sigma_O - \sigma_O^- + GI^{v,\G_{-e_{r-1}}}(S \setminus \{j\} \cup \{i_{r-1}\}) \\
    & = & \sigma_O - \sigma_O^- + MI^{v,\G_{-e_{r-1}}}(S \setminus \{j\} \cup \{i_{r-1}\}),
  \end{array}
\end{equation*}
which follows from the $k$-induction hypothesis and Case~2 acknowledging that $S \setminus \{j\} \cup \{t_O\}$ contains a two-player veto graph partnership.

\noindent\textbf{Case 4.}
If $S_P = \emptyset$, so that $|S_P| = 0$, an analogous argument to that in Case 3 applies. One considers a minimal path connecting some $j \in S$ to an essential player $t_O \in P = T \cup EB_\G(T)$. As in the previous case, the first $r-2$ equations are obtained by applying $I$-GN and $I$-F, while the last equation follows by reducing this situation to Case~3, where exactly one essential player belongs to the corresponding coalition. 
\end{proof}

The following proposition shows that each property considered in Theorem~\ref{th:MII_charact} is necessary to guarantee the uniqueness of the Myerson Interaction Index.

\begin{proposition}
    The axioms considered in Theorem \ref{th:MII_charact} are logically independent.
\end{proposition}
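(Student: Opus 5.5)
Logical independence is shown the usual way: for each of the five axioms $I$-CE, $I$-F, $I$-GN, $I$-L and $I$-SRVPC, I will exhibit a graph interaction index that satisfies condition \eqref{eq:inter_comp_indep} and the other four axioms, but not that one. Each candidate will be a modification of $MI$ that changes only the part of the index controlled by the dropped axiom. The existence of such an index shows that the axiom cannot be derived from the others in Theorem~\ref{th:MII_charact}.

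The candidates, in order, are as follows.

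\emph{Without $I$-CE:} take $GI^{v,\G}(S)=2\,MI^{v,\G}(S)$. Linearity holds. $I$-GN holds because zeros stay zeros. $I$-F holds because both sides are doubled. $I$-SRVPC holds because that identity is homogeneous. Component efficiency fails as soon as $v(C)\neq 0$.

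\emph{Without $I$-F:} take $GI^{v,\G}(S)=SI^{\bar v^{\G}}(S)$, where $\bar v^{\G}(S)=\sum_{C\in\cK_\G(S)}\frac{c}{?}$. A simpler choice is to let $GI$ equal the Shapley interaction index of the game $v$ restricted to each component, that is, $GI^{v,\G}(S)=SI^{v|_C}(S)$ for $S\subseteq C\in\cK_\G$, and $0$ otherwise. This ignores the internal edges. Component efficiency holds because the Shapley value of $v|_C$ sums to $v(C)$. Linearity is immediate. For $I$-GN, a graph null player is null in $v$, and hence in $v|_C$. For $I$-SRVPC, I need to check that $SI^{v|_C}(P\cup S)$ coincides with the quotient expression. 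This is the delicate point, and it is where I expect the main difficulty. If the check fails, I will instead take the position-value-like variant $GI=SI^{v^{\G}}$ on singletons but with a different weighting on higher orders. Fairness fails on a path with three players and $v=u_N$.

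\emph{Without $I$-GN:} keep $MI$ on singletons and on any $S$ containing a veto graph partnership of size at least two, and add to all other coalitions of size at least two a linear term such as $\lambda\,(\Delta_{v}(S)-\Delta_{v^{\G}}(S))$, chosen to be invariant under edge removals. The aim is that $I$-F and $I$-SRVPC are untouched while a graph null player gets nonzero interaction. A concrete fallback is to add a constant multiple of $v(N)$ to every pair $S$ that is connected but contains no veto partnership; $I$-F then holds provided the added term depends only on connectivity in a way symmetric in $i,j$.

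\emph{Without $I$-L:} take $GI^{v,\G}=MI^{v,\G}$ when $v$ is a scalar multiple of a unanimity game or a sum of such games with disjoint supports, and otherwise define $GI$ by a nonlinear perturbation of the higher orders that is invisible on unanimity games. Since the uniqueness proof uses linearity only to pass from unanimity games to general games, this kind of perturbation is natural.

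\emph{Without $I$-SRVPC:} take the Myerson analogue of the Banzhaf-type variant, namely $GI^{v,\G}(S)=\phi_i(v^\G)$ for $S=\{i\}$ and $GI^{v,\G}(S)=\sum_{T\supseteq S}\Delta_{v^\G}(T)/2^{t-s}$ for $|S|\ge2$. $I$-CE holds because the singletons are unchanged. $I$-L holds. $I$-GN holds because the dividends involving graph null players vanish. $I$-F holds by the same dividend argument as in Proposition~\ref{prop:MII_propert}, since edge removal affects only $T\supseteq\{i,j\}$. $I$-SRVPC fails for $u_N$ with $N=\{1,2,3\}$ on the complete graph and $P=\{1,2\}$: the left-hand side is $1/2$, while the quotient two-player game gives $1$.

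The main obstacle is building the examples without $I$-F and without $I$-GN, because the remaining axioms, especially $I$-SRVPC, impose rigid constraints. For each of these two I will verify $I$-SRVPC through the dividend formula, as in Proposition~\ref{prop:MII_propert}.
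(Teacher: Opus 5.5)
Your $I$-CE example $2\,MI$ is valid, because the other four axioms are homogeneous, and it is simpler than the paper's Banzhaf graph index. Your $I$-SRVPC index (Myerson value on singletons, weights $2^{-(t-s)}$ on the dividends of $v^\G$ for $|S|\ge 2$) also satisfies the other four axioms, but your witness is miscomputed. For $u_N$ with $N=\{1,2,3\}$, the complete graph and $P=\{1,2\}$, the right-hand side is the Myerson value of $[P]$ in $u_{\{[P],3\}}$, namely $1/2$, which equals the left-hand side. With $S=\emptyset$ the two sides are $\sum_{T\supseteq P}\Delta_{v^\G}(T)/2^{t-p}$ and $\sum_{T\supseteq P}\Delta_{v^\G}(T)/(t-p+1)$, and these coincide term by term when $t-p\le 1$. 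You need e.g.\ $N=\{1,2,3,4\}$, which gives $1/4\neq 1/3$. The paper simply takes the Myerson value on singletons and $0$ on larger coalitions.

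The genuine gap is that you have no valid examples for $I$-F, $I$-GN and $I$-L. Your $I$-F candidate $SI^{v|_C}$ violates $I$-SRVPC. That axiom compares $(v,\G)$ with $(v^\G_{[P]},\G_{[P]})$ and so forces the index to see $v^\G$. On the path with edges $\{1,2\},\{2,3\}$ and $v=u_{\{1,3\}}$, $P=\{1,3\}$ is a veto graph partnership; the left side is $SI^{u_{\{1,3\}}}(\{1,3\})=1$, while $v^\G_{[P]}=u_{\{[P],2\}}$ yields $1/2$. Your claimed $I$-F violation for $u_N$ on that path does not occur either: for $u_N$ the index is $SI^{u_N}$ on connected graphs and $0$ otherwise, which is symmetric in $i,j$. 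Your fallback (Myerson value on singletons, other weights on the $v^\G$-dividends) satisfies $I$-F by exactly the dividend argument you use in your $I$-SRVPC case, so it cannot fail $I$-F.

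For $I$-GN, adding $c\,v(N)$ to connected pairs without veto partnerships breaks \eqref{eq:inter_comp_indep}. Take $v=u_{\{3\}}$, $N=\{1,2,3\}$, $E=\{\{1,2\}\}$: the pair $\{1,2\}$ gets $c$, but $0$ in $(v|_{\{1,2\}},\G_{\{1,2\}})$. The local version $c\,v(C)$ breaks $I$-F instead. With edges $\{1,2\},\{1,3\}$ and $v=u_{\{2\}}$, deleting $\{1,2\}$ changes the term of $\{1,3\}$ by $c$ and that of $\{2,3\}$ by $0$. For $I$-L you give no index at all, and checking the perturbation against $I$-SRVPC and $I$-F is the whole task.

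The paper's constructions show what is needed.
\begin{itemize}
\item For $I$-F, on $u_T$ multiply $MI^{u_T,\G}(S)$ by $|S\cap(B_\G(T)\setminus EB_\G(T))|$ when $S$ meets both $T\cup EB_\G(T)$ and the non-essential intermediaries, then extend linearly. This factor depends on the graph, is unchanged by quotients by veto graph partnerships, but is not symmetric under edge deletion.
\item For $I$-GN, on $u_T$ add a constant $\alpha$ for connected $S\supseteq T$, $|S|\ge 2$, that cannot become a veto graph partnership by deleting edges leaving $S$, then extend linearly.
\item For $I$-L, use $MI^{v^2,\G}$ on $|S|\ge 2$ for strictly positive games. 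These have no veto partnerships when $n\ge 2$, and the switching rule does not depend on $\G$.
\end{itemize}
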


\begin{proof}
    See Appendix \ref{LIA}.
\end{proof}


\section{Graph Interaction due to the network}
\label{sec:applic}

In the original game $(N,v)$, coalition interaction reflects only the complementarities encoded in the characteristic function $v$. Once a communication graph $\Gamma$ is introduced, cooperation becomes constrained and the resulting interaction pattern may differ substantially. In a communication situation $(N,v,\Gamma)$, the Myerson restricted game $v_{\Gamma}$ represents the value attainable under these constraints, and the interaction index $MI^{v,\Gamma}$ captures both the intrinsic structure of $v$ and the relational structure induced by $\Gamma$.

This motivates isolating the component of interaction that is \emph{induced by the network}. For any non-empty $S \subseteq N$, we define the network-induced interaction as
$$
NI^{v,\Gamma}(S) \coloneq MI^{v,\Gamma}(S) - SI^{v}(S).
$$
The term $NI^{v,\Gamma}(S)$ measures how the communication structure amplifies or restricts the interaction among $S$ members. In analogy with notions such as centrality or social capital, it isolates the contribution of the relational environment from the purely game-theoretic effect of $v$. In particular, $NI^{v,\Gamma}(i)$, $i\in N$, coincides with the {\em game-theoretical centrality} introduced in \cite{GoGoMaOwPoTe03} when symmetric games are considered.

We illustrate this concept with two different games defined on the same communication structure. Let $N=\{1,2,3,4,5\}$ and consider the graph $\Gamma$ depicted below:
\begin{center}  
\begin{tikzpicture}[node_style/.style={draw,circle,minimum size=0.5cm,inner sep=2}]
			\node[node_style] (s) at (0,1) {$1$} ;
			\node[node_style] (b) at (2,2) {$2$} ;
			\node[node_style] (c) at (2,0) {$3$} ;
			\node[node_style] (d) at (4,2) {$4$} ;
			\node[node_style] (e) at (4,0) {$5$} ;
			
			\draw  [line width=1.15pt] (s) edge (b) ;
			\draw  [line width=1.15pt] (s) edge (c) ;
			\draw  [line width=1.15pt] (b) edge (d) ;
			\draw  [line width=1.15pt] (c) edge (d) ;
			\draw  [line width=1.15pt] (c) edge (e) ;
		\end{tikzpicture}
\end{center}

We consider the following games:
\begin{itemize}
    \item The messages game $v_m(S)=s(s-1)$, for all $S\subseteq N$, where $s=|S|$. This symmetric game captures the ability of coalition members to exchange bilateral messages.
    
    \item A horse market, a canonical example of a pure exchange economy, in which player 1 owns a horse and players 4 and 5 are potential buyers valuing it at 90 and 100 units, respectively. Players 2 and 3 act as intermediaries. The game is given by: $v_h(S)=0$ if $1\notin S$, $v_h(S)=100$ if $\{1,5\}\subseteq S$, and $v_h(S)=90$ if $\{1,4\}\subseteq S$ and $5\notin S$.
\end{itemize}

\begin{table}[h!]
\centering
\setlength{\tabcolsep}{4pt}      
\renewcommand{\arraystretch}{1.1} 

\caption{Interaction values in $(N,v_m)$ for single-player coalitions.}
\label{tab:mess-1}
\begin{tabular}{l*{5}{S[table-format=+2.2]}}
\toprule
 & \multicolumn{5}{c}{Coalition} \\
\cmidrule(lr){2-6}
 & {$\{1\}$} & {$\{2\}$} & {$\{3\}$} & {$\{4\}$} & {$\{5\}$} \\
\midrule
\rowcolor{gray!10}
Myerson & \num{3.77} & \num{3.60} & \num{5.93} & \num{3.77} & \num{2.93} \\
Shapley & \num{4}    & \num{4}    & \num{4}    & \num{4}    & \num{4}    \\
\rowcolor{gray!10}
Network & \num{-0.23}& \num{-0.40}& \num{1.93} & \num{-0.23}& \num{-1.07}\\
\bottomrule
\end{tabular}
\end{table}

\begin{table}
\centering
\setlength{\tabcolsep}{3pt}
\renewcommand{\arraystretch}{1.1}

\caption{Interaction values in $(N,v_m)$ for two-player coalitions }
\label{tab:mess-2}

\begin{tabular}{l*{10}{S[table-format=+2.2]}}
\toprule
 & \multicolumn{10}{c}{Coalition} \\
\cmidrule(lr){2-11}
 & \multicolumn{1}{c}{$\{1,2\}$}
 & \multicolumn{1}{c}{$\{1,3\}$}
 & \multicolumn{1}{c}{$\{1,4\}$}
 & \multicolumn{1}{c}{$\{1,5\}$}
 & \multicolumn{1}{c}{$\{2,3\}$}
 & \multicolumn{1}{c}{$\{2,4\}$}
 & \multicolumn{1}{c}{$\{2,5\}$}
 & \multicolumn{1}{c}{$\{3,4\}$}
 & \multicolumn{1}{c}{$\{3,5\}$}
 & \multicolumn{1}{c}{$\{4,5\}$} \\
\midrule
\rowcolor{gray!10}
Myerson & \num{2.83} & \num{3.83} & \num{1.33} & \num{0.50}
        & \num{1.50} & \num{2.83} & \num{0.83}
        & \num{3.83} & \num{4.83} & \num{0.50} \\
Shapley & \num{2}    & \num{2}    & \num{2}    & \num{2}
        & \num{2}    & \num{2}    & \num{2}
        & \num{2}    & \num{2}    & \num{2}    \\
\rowcolor{gray!10}
Network & \num{0.83}  & \num{1.83}  & \num{-0.67} & \num{-1.50}
        & \num{-0.50} & \num{0.83} & \num{-1.17}
        & \num{1.83}  & \num{2.83}  & \num{-1.50} \\
\bottomrule
\end{tabular}

\end{table}


\begin{table}[h!]
\centering
\setlength{\tabcolsep}{4pt}      
\renewcommand{\arraystretch}{1.1} 

\caption{Interaction values in $(N,v_h)$ for single-player coalitions.}
\label{tab:horse-1}
\begin{tabular}{l*{5}{S[table-format=+2.2]}}
\toprule
 & \multicolumn{5}{c}{Coalition} \\
\cmidrule(lr){2-6}
 & {$\{1\}$} & {$\{2\}$} & {$\{3\}$} & {$\{4\}$} & {$\{5\}$} \\
\midrule
\rowcolor{gray!10}
Myerson & \num{48.33} & \num{7.5} & \num{18.33} & \num{15} & \num{10.83} \\
Shapley & \num{65}    & \num{0}    & \num{0}    & \num{15}    & \num{20}    \\
\rowcolor{gray!10}
Network & \num{-16.67}& \num{7.5}& \num{18.33} & \num{0}& \num{-9.62}\\
\bottomrule
\end{tabular}
\end{table}

\begin{table}
\centering
\setlength{\tabcolsep}{3pt}
\renewcommand{\arraystretch}{1.1}

\caption{Interaction values in $(N,v_h)$ for two-player coalitions }
\label{tab:horse-2}

\begin{tabular}{l*{10}{S[table-format=+2.2]}}
\toprule
 & \multicolumn{10}{c}{Coalition} \\
\cmidrule(lr){2-11}
 & \multicolumn{1}{c}{$\{1,2\}$}
 & \multicolumn{1}{c}{$\{1,3\}$}
 & \multicolumn{1}{c}{$\{1,4\}$}
 & \multicolumn{1}{c}{$\{1,5\}$}
 & \multicolumn{1}{c}{$\{2,3\}$}
 & \multicolumn{1}{c}{$\{2,4\}$}
 & \multicolumn{1}{c}{$\{2,5\}$}
 & \multicolumn{1}{c}{$\{3,4\}$}
 & \multicolumn{1}{c}{$\{3,5\}$}
 & \multicolumn{1}{c}{$\{4,5\}$} \\
\midrule
\rowcolor{gray!10}
Myerson & \num{15} & \num{35} & \num{30} 
        & \num{20} & \num{-30} & \num{15}
        & \num{0} & \num{-15} & \num{20} & \num{-30} \\
Shapley & \num{0}    & \num{0}    & \num{45}    & \num{55}
        & \num{0}    & \num{0}    & \num{0}
        & \num{0}    & \num{0}    & \num{-45}    \\
\rowcolor{gray!10}
Network & \num{15}  & \num{35}  & \num{-15} & \num{-35} & \num{-30} 
        & \num{15} & \num{0} & \num{-15}
        & \num{20}  & \num{15}  \\
\bottomrule
\end{tabular}

\end{table}

The network-induced interaction shows how the same communication structure affects cooperative possibilities differently depending on the underlying game.

In the messages game, see Tables \ref{tab:mess-1} and \ref{tab:mess-2}, symmetry implies that coalitions of the same size have identical interaction values in the absence of communication constraints. Hence, any deviation from this benchmark is entirely driven by the network. For single-player coalitions, the network generates marked asymmetries: centrally positioned players exhibit positive network-induced interaction, whereas peripheral players are penalized. This reproduces the game-theoretical centrality of \cite{GoGoMaOwPoTe03}.  The network also reshapes interactions among two-player coalitions. Those coalitions that are  connected display positive network-induced interaction, while distant pairs experience a reduction. Coalitions involving player 3, who occupies a central position, benefit the most. Thus, the network redistributes complementarities not only across players but also across coalitions, according to their position in the graph.

The effects are even more pronounced in the horse market (see Tables \ref{tab:horse-1} and \ref{tab:horse-2}), where the underlying game already exhibits strong asymmetries. Without communication constraints, the Shapley value allocates all surplus to the seller and the buyers, while intermediaries receive zero payoff. Once the communication graph is taken into account, the pattern of interaction changes substantially. At the individual level, intermediaries obtain positive network-induced interaction despite being null players in the underlying exchange game. Their position allows them to extract bargaining power by controlling access between the seller and the buyers. Conversely, the seller experiences a significant negative network-induced interaction, as the communication structure restricts direct access to buyers. 
At the coalition level, the sign and magnitude of network-induced interaction depend on the structural role of the coalition. Coalitions involving intermediaries tend to gain interaction, whereas coalitions that bypass them or compete for access to the seller tend to lose it. Moreover, the different access of buyers 4 and 5 to intermediaries 2 and 3 endogenously creates or suppresses complementarities between pairs of players, independently of the underlying exchange values.

Together, these examples show that network-induced interaction captures both the redistribution of bargaining power across players and the reorganization of complementarities across coalitions generated by the communication structure.

\section{Conclusions}
\label{sec:conclusions}

This paper extends the Shapley Interaction Index to cooperative environments in which
cooperation is constrained by an underlying communication structure. Starting from a
communication situation $(N,v,\Gamma)$, we follow Myerson's approach and take the
graph-restricted game $v_{\Gamma}$ as the appropriate representation of what coalitions
can actually attain under network constraints. Building on this idea, we propose the
\emph{Myerson interaction index} as the Shapley Interaction Index of the restricted game,
$MI^{v,\Gamma}(S)=SI^{v_{\Gamma}}(S)$, thereby capturing in a single object both the intrinsic
complementarities encoded in $v$ and the relational constraints imposed by $\Gamma$.

A main contribution of the paper is the proposal of an axiomatic system that extends the characterizations of both the Shapley Interaction Index and the Myerson value. As a consequence, first-order Myerson interactions coincide with the Myerson value, while for complete graphs the Myerson interaction index coincides with the Shapley interaction index. In particular, we extend the fairness principle from allocations to higher-order interaction effects, and we also adapt the notion of component efficiency. First, we guarantee that first-order interactions coincide with the Myerson value within each connected component. We then introduce an appropriate notion of a \emph{graph null player}. Together with the extended fairness principle, linearity, and a strong reduced veto partnership consistency condition, these axioms lead to a uniqueness result: the Myerson interaction index is the \emph{only} graph interaction index satisfying the proposed system of requirements. Moreover, we establish the logical independence of the axioms, thereby clarifying the precise role played by each normative axiom in the characterization.

Beyond the axiomatic characterization, the proposed framework provides an interpretable decomposition of how the network reshapes cooperation. By comparing interaction in the original game and in the restricted game, we define \emph{network-induced interaction} as
$$
NI^{v,\Gamma}(S) \coloneq MI^{v,\Gamma}(S) - SI^{v}(S),
$$
which isolates the effect of the communication structure from the purely game-theoretic
effect of $v$. At the individual level, $NI^{v,\Gamma}(i)$ coincides with the
game-theoretical centrality of \cite{GoGoMaOwPoTe03} when symmetric games are considered, while at higher orders it provides a
coalition-level notion of network-driven synergy or redundancy.

The examples highlight that the same communication structure can reorganize
interaction patterns in fundamentally different ways depending on the underlying game.
In a fully symmetric setting (the messages game), all deviations from the uniform
benchmark are entirely attributable to the network, generating asymmetries across both
players and coalitions according to their position in the graph. In the horse market,
where the underlying game is already asymmetric, the communication structure can
endogenously create bargaining power for intermediaries and reduce the interaction of
players who are valuable in the unrestricted market but lack direct access. More
generally, these examples illustrate that network constraints do not merely scale down
payoffs: they redistribute interaction, reshape complementarities across coalitions, and
may generate effective power for agents who are otherwise null in the underlying game.

Overall, the Myerson interaction index provides a coherent and tractable extension of Shapley-based interaction analysis to restricted communication environments, preserving the normative
appeal of the Shapley value. This opens the door to applications in which the relevant object is not only the topology of $\Gamma$, but also the  value $v$ generated by groups of nodes.
In particular, the framework supports \emph{counterfactual} analyses of network evolution, by quantifying how the establishment or removal of specific links (or sets of links)
reorganizes complementarities and redistributes interaction and bargaining power across players and coalitions.

Several directions remain for future research. On the methodological side, further work may develop computational methods and approximations for large player sets, and investigate alternative families of
graph interaction indices (e.g., Banzhaf-type counterparts) under analogous axiomatic systems. Finally, empirical applications may use network-induced interaction to quantify how relational environments reallocate cooperative gains and to identify coalitions whose
synergies are primarily structural rather than intrinsic to the underlying game.


\bibliographystyle{apalike}
\bibliography{references}

@article{FlMoTe19,
  title={Evaluating groups with the generalized {S}hapley value},
  author={Flores, R. and Molina, E. and Tejada, J.},
  journal={4OR},
  volume={17},
  number={2},
  pages={141--172},
  year={2019}
}

@article{FuKoMa06,
  title={Axiomatic characterizations of probabilistic and cardinal-probabilistic interaction indices},
  author={Fujimoto, K. and Kojadinovic, I. and Marichal, J.L.},
  journal={Games and Economic Behavior},
  volume={55},
  number={1},
  pages={72--99},
  year={2006}
}

@article{GoGoMaOwPoTe03,
 title={Centrality and power in social networks: a game theoretic approach},
  author={G\'{o}mez, D. and Gonz\'{a}lez-Arang\"{u}ena, E. and Manuel, C. and Owen, G. and del Pozo, M. and Tejada, J.},
  journal={Mathematical Social Sciences},
  volume={46},
  number={1},
  pages={27--54},
  year={2003}
}

@article{Gr96,
    author = {Grabisch, M.},
    title = {The application of fuzzy integrals in
multicriteria decision making} ,
    journal = {European Journal of Operational Research},
    volume={89},
    pages={445--456},
    year={1996}
}

@article{Gr97a,
  title={\bd{a} {K}-order additive discrete fuzzy measures and their representation},
  author={Grabisch, M.},
  journal={Fuzzy Sets and Systems},
  volume={92},
  number={2},
  pages={167--189},
  year={1997}
}

@incollection{GrLa2016,
  title={Fuzzy measures and integrals in MCDA},
  author={Grabisch, M. and Labreuche, C.},
  booktitle = {Multiple criteria decision analysis: state of the art surveys},
  publisher = {NY: Springer New York},
  year={2016},
  pages = {553--603},
  address = {New York}
}

@article{GrRo99,
  title={An axiomatic approach to the concept of interaction among players in cooperative games},
  author={Grabisch, M. and Roubens, M.},
  journal={International Journal of Game Theory},
  volume={28},
  number={4},
  pages={547--565},
  year={1999}
  }

@incollection{Ha59,
  author = {Harsanyi, J.C.},
  title = {A bargaining model for the cooperative n-person game},
  booktitle = {Contributions to the Theory of Games (AM-40), Vol. 4},
  publisher = {Princeton University Press},
  year = {1959},
  editor = {Tucker, A.W. and Luce, D.R.},
  pages = {325--356},
  address = {Princeton}
}

@article{KaSa87,
  title={On weighted {S}hapley values},
  author={Kalai, E. and Samet, D.},
  journal={International Journal of Game Theory},
  volume={16},
  number={3},
  pages={205--222},
  year={1987}
}

@article{My77,
  title={Graphs and cooperation in games},
  author={Myerson, R.B.},
  journal={Mathematics of Operations Research},
  volume={2},
  number={3},
  pages={225--229},
  year={1977}
}

@article{Ow72,
  title={Multilinear extensions of games},
  author={Owen, G.},
  journal={Management Science},
  volume={18},
  number={5-part-2},
  pages={64--79},
  year={1972},
  publisher={INFORMS}
}

@article{Ow86,
  title={Values of graph-restricted games},
  author={Owen, G.},
  journal={SIAM Journal on Algebraic Discrete Methods},
  volume={7},
  number={2},
  pages={210--220},
  year={1986}
}

@incollection{CaInML2025,
author = {P\'erez-Sechi, C. I. and  Guti\'errez I. and Castro J. and G\'omez   D. and  Mart\'in, D. and Esp\'inola, R.},
booktitle = {2025 IEEE International Conference on Fuzzy Systems (FUZZ)},
 publisher = {IEEE},
 year = {2025},
 pages = {1--6}
 }

@article{SzBaMiRa2015,
  title={The Game-Theoretic Interaction Index on Social Networks with Applications to Link Prediction and Community Detection},
  author={Szczepanski, P. L. and Barcz, A. S. and Michalak, T. P. and Rahwan, T.},
  journal={IjCAI},
  volume={15},
  pages={638--644},
  year={2015}
}

@incollection{TaMiWool2019,
  author = {Tarkowski, M. and Michalak, T. and Wooldridge, M.},
  title = {A game-theoretic algorithm for link prediction},
  booktitle = {arXiv:1912.12846},
  year = {2019},
  publisher = {arXiv preprint}
}

@incollection{Sh53,
  author = {Shapley, L.S.},
  title = {A value for n-person games},
  booktitle = {Contributions to the Theory of Games, Vol. 2},
  publisher = {Princeton University Press},
  year = {1953},
  editor = {Kuhn, H.W. and Tucker, A.W.},
  pages = {307--317},
  address = {Princeton}
}

@book{SlNo12,
  author = {Slikker, M. and van den Nouweland, A.},
  title = {Social and economic networks in cooperative game theory},
  volume = {27},
  publisher = {Springer Science+Business Media, LLC},
  year = {2012}
}

@incollection{SuDhAg20,
  author = {Sundararajan, M. and Dhamdhere, K. and Agarwal, A.},
  title = {The shapley taylor interaction index.},
  booktitle = {International conference on machine learning},
  year = {2020},
  pages = {9259–9268},
  publisher = {PMLR}
}

@article{TsYeRa23,
  title={Faith-shap: The faithful shapley interaction index},
  author={Tsai, C.P. and Yeh, C.K. and Ravikumar, P.},
  journal={Journal of Machine Learning Research},
  volume={24},
  number={94},
  pages={1--42},
  year={2023}
}

\appendix

\section{Logical independence of the axioms}
\label{LIA}

In what follows, we show that each property considered in Theorem~\ref{th:MII_charact} is necessary to guarantee the uniqueness of the Myerson Interaction Index.

\vspace*{1em}
\noindent \textbf{$I$-Component Efficiency}.
Let us consider the Banzhaf interaction index \citep{GrRo99}, defined by
\begin{equation*}
  BI^v(S) \coloneq \sum_{T \subseteq N \setminus S} {\frac{1}{2^{n-s}}\,\delta^v_S(T)}.
\end{equation*}
We may extend it to the Banzhaf graph interaction index by $BI^{v,\G}(S) \coloneq BI^{v^\G}(S)$ for all $\G \in \cG^N$ and all $S \subseteq N$. This index is well defined, i.e. it satisfies conditions~\eqref{eq:inter_comp_indep}, since, using the Harsanyi-dividend representation of the Banzhaf interaction index
\begin{equation*}
  BI^{v^\G}(S) = \sum_{T \supseteq S} {\frac{1}{2^{t-s}}\,\Delta_{v^\G}(T)}
\end{equation*}
which may be found in \cite{GrRo99}, the same reasoning in the proof of Proposition~\ref{prop:MII_gii} applies.

Moreover, by arguments analogous to those in Proposition~\ref{prop:MII_propert}, the Banzhaf graph interaction index satisfies $I$-GN, $I$-F, $I$-L, and $I$-SRVPC. However, since Banzhaf value is not efficient, but rather 2-efficient \citep{GrRo99}, the index does not satisfy $I$-CE. It suffices to choose a game $v \in G^N$ such that $\sum_{i \in N } {BI^{v}(i)} \neq v(N)$ and $\G$ the complete graph on $N$ so that $BI^{v}(i) = BI^{v,\G}(i)$ for all $i \in N$.

\vspace*{1em}
\noindent \textbf{$I$-Graph Null}. 
Define a graph interaction index $GI$ as follows. For every $N \in \cN$ and $\G \in \mathcal G^N$, let
\begin{equation*}
  GI^{v,\G}(S) \coloneq 0, \quad\forall S \subseteq N
\end{equation*}
for every null game $(N,v) \in G^N$; and
\begin{equation*}
  GI^{v,\G}(S) \coloneq \sum_{T \in 2^N \setminus\{\emptyset\}} {\Delta_v(T)\,GI^{u_T,\G}(S)},\quad\forall S \subseteq N
\end{equation*}
for every non-null game $(N,v) \in G^N$. Thus, $GI$ is defined on the basis of unanimity games $\{u_T\,|\,T \in 2^N \setminus\{\emptyset\}\}$. Specifically, given $T \in 2^N \setminus\{\emptyset\}$, let
\begin{equation*}
  GI^{u_T,\G}(S) \coloneq MI^{u_T,\G}(S) + f_{\alpha}^{T,\G}(S),\quad\forall S \subseteq N
\end{equation*}
where the function $f_{\alpha}^{T,\G}: 2^N \setminus \{\emptyset\} \rightarrow \SR$ is defined for $\alpha > 0$ by
\begin{equation*}
  f_{\alpha}^{T,\G}(S) \coloneq
  \begin{cases}
    \alpha, & \text{if $|S| \geq 2$ and $S$ satisfies conditions (i) to (iii)} \\
         0, & \text{otherwise}
  \end{cases}
\end{equation*}
with conditions:
\begin{enumerate}[label=(\roman*)]
  \item $T \subseteq S$.
  \item $S$ is connected in $\G$.
  \item $\nexists E' \subseteq w(S) \coloneq \{\{i,j\}\in E\,|\, i\in S,\ j\notin S\}$ such that $S$ becomes a veto graph partnership in $\G^{-E'}$.
\end{enumerate}
Note that, with $E' = \emptyset$ in (iii), we require in particular that $S$ is not a veto graph partnership with respect to $\G$.

Clearly, $GI$ is well defined in terms of conditions~\eqref{eq:inter_comp_indep} as it essentially coincides with the Myerson interaction index in unanimity games $u_T$ except for $S \subseteq N$ with $|S| \geq 2$ satisfying (i) to (iii). In such cases, due to (i) and (ii), $T \subseteq S \subseteq C$ with $C \in \cK_\G$ and $f_{\alpha}^{T,\G}(S) = \alpha = f_{\alpha}^{T,\G_C}(S)$ follows with $u_T\vert_C$ a non-null game, so $GI^{u_T,\G}(S) = GI^{u_T\vert_C,\G_c}(S)$ holds. In addition, $I$-L is clearly satisfied by construction, as well as $I$-CE because $GI$ coincides with $MI$ for first-order interactions.

To verify $I$-F and $I$-SRVPC, by construction it suffices to do it for unanimity games. Given $T \in 2^N \setminus \{\emptyset\}$ and $\G \in \cG^N$, we begin with $I$-F. Consider $\{i,j\} \in E$ and $S \subseteq N \setminus \{i,j\}$. If $S \cup i$ verifies conditions (i) to (iii) in $\G$, then it keeps verifying all conditions with respect to $\G_{-\{i,j\}}$ as removing edge $\{i,j\}$ with $i \in S \cup i$ and $j \notin S \cup i$ does not affect them. Similarly, if any condition is not met by $S \cup i$ in $\G$, removing edge $\{i,j\}$ cannot result in them being fulfilled. Therefore, $f_{\alpha}^{T,\G}(S \cup i) = f_{\alpha}^{T,\G_{-\{i,j\}}}(S \cup i)$ and it holds
\begin{equation*}
  GI^{u_T,\G}(S \cup i) - GI^{u_T,\G_{-\{i,j\}}}(S \cup i) = MI^{u_T,\G}(S \cup i) - MI^{u_T,\G_{-\{i,j\}}}(S \cup i).
\end{equation*}
The same argument applies to $S \cup j$ and, as $MI$ verifies $I$-F, consequently so does $GI$.

Secondly, to show $I$-SRVPC, consider $P \subseteq N$ a veto graph partnership in $\G$ for $u_T$, i.e.~$P \subseteq T \cup EB_{\G}(T)$ with $P \neq \emptyset$, and $S \subseteq N \setminus P$. If $S = \emptyset$, then $f_{\alpha}^{T,\G}(P \cup S) = 0$ due to condition (iii). In addition, $|[P] \cup S| = 1$, so $f_{\alpha}^{T_{[P]},\G_{[P]}}([P] \cup S) = 0$ as well. Hence, it holds
\begin{equation*}
  GI^{u_T,\G}(P \cup S) = MI^{u_T,\G}(P \cup S) =
  MI^{u_{T,[P]},\G_{[P]}}([P] \cup S) = GI^{u_{T,[P]},\G_{[P]}}([P] \cup S)
\end{equation*}
as $MI$ satisfies $I$-SRVPC. Otherwise, $|P \cup S| \geq 2$ and it follows that $P \cup S$ verifies conditions (i) to (iii) if and only if $[P] \cup S$ does (with respect to the corresponding quotient game). As a result, $f_{\alpha}^{T,\G}(P \cup S) = f_{\alpha}^{T_{[P]},\G_{[P]}}([P] \cup S)$ so, in a similar manner as before, $I$-SRVPC holds.

However, $GI$ violates $I$-GN since, for every non-empty $T \subsetneq N$ and connected graph $\G \in \cG^N$ such that $D_0^{\G}(u_T) = N \setminus (T \cup B_{\G}(T)) \neq \emptyset$, we have
\begin{equation*}
  GI^{u_T,\G}(N) = MI^{u_T,\G}(N) + \alpha = \alpha > 0
\end{equation*}
as $N$ satisfies conditions (i) to (iii).

\vspace*{5mm}
\noindent \textbf{$I$-Fairness}. 
Define a graph interaction index $GI$ on the basis of unanimity games $\{u_T\,|\,T \in 2^N \setminus\{\emptyset\}\}$ as in the previous discussion, but with the following specification for $GI^{u_T,\G}$ for every $N \in \cN$, $\G \in \cG^N$ and $T \in 2^N \setminus\{\emptyset\}$:
\begin{equation*}
  GI^{u_T,\G}(S) \coloneq
  \begin{cases}
    |S_{-\cE(T)}|\,MI^{u_T,\G}(S),& \text{if } S_{\cE(T)} \neq \emptyset \text{ and } S_{-\cE(T)} \neq \emptyset \\
    MI^{u_T,\G}(S),& \text{otherwise}
\end{cases}
\end{equation*}
where $S_{\cE(T)} \coloneq S \cap (T \cup EB_\G(T))$ and $S_{-\cE(T)} \coloneq S \cap (B_\G(T) \setminus EB_\G(T))$. Note that, if $S = i$, then $S_{\cE(T)} = \emptyset$ or $S_{-\cE(T)} = \emptyset$ and thus $GI^{u_T,\G}(i) = MI^{u_T,\G}(i)$.

First, we check that $GI$ satisfies conditions~\eqref{eq:inter_comp_indep} and thus is well defined. Let $S \subseteq N$ be such that $\nexists C \in \cK_{\G}$ with $S \subseteq C$. Hence, being $MI$ a graph interaction index, $MI^{u_T,\G}(S) = 0$ and $GI^{u_T,\G}(S) = 0$ follows. On the other hand, let $S \subseteq N$ be such that $S \subseteq C$ with $C \in \cK_{\G}$. If $T$ cannot be connected in $\G$, then
\begin{equation*}
  GI^{u_T,\G}(S) = MI^{u_T,\G}(S) = 0
\end{equation*}
with the last equality a consequence of $u_T$ restricted to $\G$ being a null game. Otherwise, $T$ pertains to a connected component in $\G$. If this connected component is not $C$, then $S_{\cE(T)} = S_{-\cE(T)} = \emptyset$ with respect to both $\G$ and $\G_C$ and it follows
\begin{equation*}
  GI^{u_T,\G}(S) = MI^{u_T,\G}(S) = MI^{u_T\vert_C,\G_c}(S) = GI^{u_T\vert_C,\G_c}(S)
\end{equation*}
because $MI$ verifies conditions \eqref{eq:inter_comp_indep}. If $T \subseteq C$, then the identification of $S_{\cE(T)}$ and $S_{-\cE(T)}$ coincides in $\G$ and the restricted graph $\G_C$ so, in the same manner as before, $MI^{u_T,\G}(S) = MI^{u_T\vert_C,\G_c}(S)$ and consequently $GI^{u_T,\G}(S) = GI^{u_T\vert_C,\G_c}(S)$.

As in the previous discussion, $I$-L is clearly satisfied by construction, as well as $I$-CE because $GI$ coincides with $MI$ for first-order interactions. $I$-GN is also immediate since for every unanimity game $u_T$, graph $\G$ and graph null player $i \in D_0^{\G}(u_T)$ the Myerson interaction index verifies $MI^{u_T,\G}(S \cup i) = 0$ for all $S \subseteq N \setminus i$ and $GI^{u_T,\G}(S \cup i) = 0$ follows by definition.

Concerning $I$-SRVPC, it suffices to check it for unanimity games. If $T$ cannot be connected in $\G$, then $u_T$ restricted to $\G$ is a null game and
\begin{equation*}
  GI^{u_T,\G}(P \cup S) = 0 = GI^{u_{T,[P]}^{\G},\G_{[P]}}([P] \cup S),\quad\forall S \subseteq N \setminus P
\end{equation*}
for every veto graph partnership $P$. Otherwise, let $P \subseteq T \cup EB_\G(T)$ be a veto graph partnership in $(N,u_T,\G)$ and $S \subseteq N \setminus P$. Since $\emptyset \neq P \subseteq (P \cup S)_{\cE(T)}$, two cases may arise:
\begin{enumerate}[label=(\roman*)]
  \item If $(P \cup S)_{-\cE(T)} = \emptyset$, then
  \begin{equation*}
    GI^{u_T,\G}(P \cup S) = MI^{u_T,\G}(P \cup S) =
    MI^{u_{T,[P]}^{\G},\G_{[P]}}([P] \cup S) = GI^{u_{T,[P]}^{\G},\G_{[P]}}([P] \cup S)
  \end{equation*}
  as $MI$ satisfies $I$-SRVPC.
  \item If $(P \cup S)_{-\cE(T)} \neq \emptyset$, in a similar way as before, it follows
  \begin{equation*}
    \begin{array}{r l}
      GI^{u_T,\G}(P \cup S) & = |S_{-\cE(T)}|\,MI^{u_T,\G}(P \cup S) \\
      & = |S_{-\cE(T)}|\,MI^{u_{T,[P]}^{\G},\G_{[P]}}([P] \cup S) = GI^{u_{T,[P]}^{\G},\G_{[P]}}([P] \cup S),
    \end{array}
  \end{equation*}
  where the last equality holds since $|S_{-\cE(T)}|$ is invariant under the quotient operation with respect to any veto graph partnership $P$ of $(N,u_T,\G)$.
\end{enumerate}

However, $GI$ does not satisfy $I$-F, as shown by the following counterexample. Let $(N,v) = (\{1,2,3,4,5\},u_{\{1,5\}})$ and $\G = (N,E)$, with
\begin{equation*}
  E=\{\{1,2\},\{1,3\},\{1,4\},\{2,5\},\{3,5\},\{4,5\}\}. 
\end{equation*}
Then $B_\G(\{1,5\}) = \{2,3,4\}$ and $EB_\G(\{1,5\}) = \emptyset$. Consider the edge $\{2,5\}$ and the set $S = \{3,4\} \subseteq N\setminus \{2,5\}$. The following calculations hold:
\begin{itemize}
  \item $\displaystyle GI^{v,\G}(\{2,3,4\}) = MI^{v,\G}(\{2,3,4\}) = \sum_{L \supseteq \{2,3,4\}} {\frac{\Delta_{v^{\G}}(L)}{l-2}} = \frac{1}{3}$
  \item $GI^{v,\G_{-\{2,5\}}}(\{2,3,4\}) = 0$ being $2$ a graph null player
  \item $\displaystyle GI^{v,\G}(\{3,4,5\}) = 2\,MI^{v,\G}(\{3,4,5\}) = 2 \sum_{L \supseteq \{3,4,5\}} {\frac{\Delta_{v^{\G}}(L)}{l-2}}  = -\frac{1}{3}$
  \item $\displaystyle GI^{v,\G_{-\{2,5\}}}(\{3,4,5\}) = 2\,MI^{v,\G_{-\{2,5\}}}(\{3,4,5\}) = 2 \sum_{L \supseteq \{3,4,5\}} {\frac{\Delta_{v^{\G_{-\{2,5\}}}}(L)}{l-2}} = -1$
\end{itemize}
Thus, $I$-F fails, since
\begin{equation*}
  GI^{v,\G}(\{2,3,4\}) - GI^{v,\G_{-\{2,5\}}}(\{2,3,4\}) = \frac{1}{3} \neq
  \frac{2}{3} = GI^{v,\G}(\{3,4,5\}) - GI^{v,\G_{-\{2,5\}}}(\{3,4,5\}).
\end{equation*}

\vspace*{5mm}
\noindent \textbf{$I$-Strong Reduced Veto Partnership Consistency}. Define a graph interaction index $GI$ as follows. For every $N \in \cN$ and $\G \in \cG^N$, set $GI^{v,\G}(i) \coloneq \mu_i^v(\G)$, for all singletons $\{i\} \subseteq N$, and $GI^{v,\G}(S) \coloneq 0$, for all $S \subseteq N$ with $|S| \geq 2$.

In order to verify that $GI$ is well defined we check conditions~\eqref{eq:inter_comp_indep}. Let $(N,v,\G)$ be a communication situation. Whenever $S \subseteq N$ is not a singleton, then $GI^{v,\G} = GI^{v\vert_C,\G_C}(S) = 0$ and the conditions hold. If $S$ is a singleton, then $GI$ coincides with the Myerson interaction index and conditions are also fulfilled.

Moreover, $GI$ satisfies $I$-CE, since first-order interactions coincide with the Myerson value; $I$-GN is trivially verified because all involved interactions take a value of $0$; and, finally, $I$-F and $I$-L are also trivial in the same manner except for singletons, which derive from the Myerson value properties of \emph{fairness} and \emph{linearity}. 

However, $GI$ does not satisfy $I$-SRVPC, as shown by the following counterexample. Let $(N,v) = (\{1,2\},u_{\{1,2\}})$ and $\G = (N,E)$, with $E = \{\{1,2\}\}$. Clearly, $P = \{1,2\}$ is a veto graph partnership, but
\begin{equation*}
  GI^{v,\G}(\{1,2\}) = 0 \neq 1 = GI^{v_{[1,2]}^{\G},\G_{[1,2]}}([1,2])
\end{equation*}
where the last interaction takes the Myerson value of a single player in a non-null unanimity game.

\vspace*{5mm}
\noindent \textbf{$I$-Linearity}. Define a graph interaction index $GI$ for all communication situations $(N,v,\G)$ by
\begin{equation*}
  GI^{v,\G}(S) \coloneq
  \begin{cases}
    MI^{v^2,\G}(S),& \text{if } |S| \geq 2 \text{ and } v(T) > 0, \text{ for all } T \neq \emptyset\\
    MI^{v,\G}(S),& \text{otherwise}
  \end{cases}
\end{equation*}
where $v^2$ refers to the game where the payoffs of every coalition are the squared value of that of game $v$.

The $GI$ graph interaction index is well defined, since the Myerson interaction index satisfies conditions~\eqref{eq:inter_comp_indep} for every game in $G^N$. It verifies $I$-CE, since first-order interactions coincide with the Myerson value. Concerning $I$-GN, note that a player is graph null in $(N,v,\G)$ if and only if it is a graph null player in $(N,v^2,\G)$. Thus, $GI$ satisfies $I$-GN as $MI$ does for every game in $G^N$.

Furthermore, note that if $v$ is a strictly positive game, then removing edges from the communication graph $\G$ does not generate zero-value coalitions. As a consequence, $GI$ verifies $I$-F because the Myerson interaction index does for every game in $G^N$ and there is no equation involving $MI$ with respect to both $v$ and $v^2$.

Finally, with respect to $I$-SRVPC, let $P$ be a veto graph partnership in communication situation $(N,v,\G)$. Without loss of generality assume $|P| \geq 2$. If $P \subsetneq N$, then let $i \in N \setminus P$. Being $P$ a veto graph partnership, $v(i) = v_{[P]}^{\G}(i) = 0$ and both $v$ and its associated quotient game take zero-values. Therefore, $GI$ coincides with $MI$ and satisfies $I$-SRVPC. On the other hand, if $P = N$, then $GI^{v,\G}(P) = MI^{v,\G}(P)$ as $|P| \geq 2$ and $v(i) = 0$, for all $i \in P$. In addition, $GI^{v_{[P]}^{\G},\G_{[P]}}([P]) = MI^{v_{[P]}^{\G},\G_{[P]}}([P])$ because $|[P]| < 2$. As a consequence, $GI$ also coincides with $MI$ and satisfies $I$-SRVPC.

However, by construction, $GI$ does not satisfy $I$-L. It suffices to consider two strictly positive games $v_1$ and $v_2$ with at least two players and a complete graph $\G$ to show that
\begin{equation*}
  GI^{v_1+v_2,\G}(N) = MI^{(v_1+v_2)^2,\G}(N) \neq
  MI^{v_1^2,\G}(S) + MI^{v_2^2,\G}(S) = GI^{v_1,\G}(S) + GI^{v_2,\G}(S)
\end{equation*}
because $MI$ is linear and $(v_1+v_2)^2 = v_1^2+2v_1v_2+v_2^2 \neq v_1^2+v_2^2$ with $v_1,v_2 > 0$.
\end{document}